\documentclass[11pt, reqno]{amsart}

\pagestyle{plain}

\usepackage[latin1]{inputenc}
\usepackage{amsfonts}
\usepackage{amsmath}
\usepackage{amssymb}
\usepackage{amsthm}
\usepackage{fontenc}
\usepackage{graphicx}
\usepackage[all]{xy}
\usepackage[font=small, labelfont=bf]{caption}
\usepackage{subcaption}
\usepackage[colorlinks=true,urlcolor=blue,linkcolor=red,citecolor=magenta]{hyperref}
\usepackage{url}
\usepackage[left=1in,right=1in,top=1in,bottom=1in]{geometry}
\linespread{1.2}

\newtheorem{theorem}{Theorem}[section]
\newtheorem{lemma}[theorem]{Lemma}
\newtheorem{proposition}[theorem]{Proposition}

\newtheorem{corollary}[theorem]{Corollary}

\newtheorem{conjecture}{Conjecture}
\newtheorem{question}{Question}

\numberwithin{equation}{section}

\DeclareMathOperator{\Conv}{Conv} 
\DeclareMathOperator{\Aff}{Aff}

\DeclareMathOperator{\Iso}{Iso}

\DeclareMathOperator{\Stab}{Stab}
\DeclareMathOperator{\Gens}{Gens}

\DeclareMathOperator{\LinSym}{LinSym}
\DeclareMathOperator{\OSym}{OSym}

\begin{document}

\title{Inscribed Tverberg-Type Partitions for Orbit Polytopes}

\author{Steven Simon}
\address[SS]{Bard College, Department of Mathematics}

\email{ssimon@bard.edu}
\author{Tobias Timofeyev}
\address[TT]{University of Vermont, Deparment of Mathematics and Statistics}
\email{tobias.timofeyev@vt.edu}

\maketitle

\begin{abstract}  Tverberg's theorem states that any set of $t(r,d)=(r-1)(d+1)+1$ points in $\mathbb{R}^d$  can be partitioned into $r$ subsets whose convex hulls have non-empty $r$-fold intersection. Moreover, generic collections of fewer points cannot be so divided. Extending earlier work of the first author, we show that one can nonetheless guarantee inscribed ``polytopal partitions" with specified symmetry conditions in many such circumstances. Namely, for any faithful and full--dimensional orthogonal representation $\rho\colon G\rightarrow O(d)$ of any order $r$ group $G$, we show that a generic set of $t(r,d)-d$ points in $\mathbb{R}^d$ can be partitioned into $r$ subsets so that there are $r$ points, one from each of the resulting convex hulls, which are the vertices of a convex $d$--polytope whose isometry group contains $G$ via the regular action afforded by the representation. As with Tverberg's theorem, the number of points is optimal for this. At one extreme, this gives polytopal partitions for all regular $r$--gons in the plane, as well as for three of the six regular 4--polytopes in $\mathbb{R}^4$. At the other extreme, one has polytopal partitions for $d$-polytopes on $r$ vertices with isometry group equal to $G$ whenever $G$ is the isometry group of a vertex--transitive $d$-polytope.
\end{abstract}

\section{Introduction and Statement of Main Results}
\label{sec:1}

Tverberg's theorem~\cite{T66} establishes that any set of $t(r,d)=(r-1)(d+1)+1$ points in $\mathbb{R}^d$ can be partitioned into $r$ subsets whose convex hulls have non-empty $r$--fold intersection, called a Tverberg $r$--partition. The result is of fundamental importance in discrete and convex geometry, with a panoply of rich generalizations and variants in geometric combinatorics and beyond. We refer the reader to the surveys ~\cite{BSo19, BZ17,DELGMM19} for the history of Tverberg's theorem and its relatives, as well as a sampling of its many applications.

The Tverberg number $t(r,d)$ is very tight in that generic collections of fewer points (e.g., those in ``strong general position''~\cite{PS14}) do not admit a Tverberg $r$--partition. In such circumstances, it was asked in~\cite{LS21} whether one can nonetheless ensure ``inscribed'' polytopal variants of Tverberg's theorem. Namely, suppose that $P(r,d)$ is a prescribed  $d$--dimensional polytope on $r$ vertices in $\mathbb{R}^d$.  We say that a set of $N$ points in $\mathbb{R}^d$ can be \textit{$P(r,d)$-partitioned} (or simply \textit{polytopally partitioned} when the context is clear) if there exists a partition $A_1,\ldots, A_r$  of the set so that there are $r$ points $x_1\in \Conv(A_1),\ldots,  x_r\in \Conv(A_r)$, one from each of the resulting convex hulls, so that $\{x_1,\ldots, x_r\}$ is the vertex set of $P(r,d)$. As usual, we say that two polytopes in $\mathbb{R}^d$ are similar provided they are equal after the application of an isometry of $\mathbb{R}^d$ composed with uniform scaling. Lastly, we shall say that a property holds for a ``generic'' set of $N$ points in $\mathbb{R}^d$ provided the set of all $N$-tuples $(x_1,\ldots, x_N)$ of points in $\mathbb{R}^d$ which satisfy that property is a dense open set in $(\mathbb{R}^d)^N$ with full Lebesgue measure.

\begin{question}
\label{quest:quest} 
What is the minimum number $N_{P(r,d)}$ for which a generic set of $N$ points in $\mathbb{R}^d$ can be polytopally partitioned by a polytope similar to $P(r,d)$? \end{question} 

 Note that a generiticity condition on point sets in Question~\ref{quest:quest} is necessary, since for example any finite point set in $\mathbb{R}^d$ which lies in a hyperplane cannot be $P(r,d)$--partitioned by \textit{any} $d$-dimensional polytope on any number of vertices. Additionally, as it is natural to ask Question~\ref{quest:quest} with respect to other notions of equivalence for polytopes (for instance, affine or combinatorial equivalence), let us emphasize that the number $N_{P(r,d)}$ will \textit{always} be taken with respect to similarity. 

The main result of~\cite{LS21} gave tight results to Question~\ref{quest:quest} for regular polygons. Figure~\ref{fig:pic} above gives an example when $r=4$. 
	 
\begin{theorem}
\label{thm:polygon}
Let $r\geq 3$ and let $P_r=P(r,2)$ be a regular polygon. Then $N_{P_r}=t(r,2)-2=3r-4$. 
\end{theorem}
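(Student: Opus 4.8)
\emph{The plan} is to prove $N_{P_r}\le 3r-4$ by a Sarkaria-style tensor reduction to the colorful Carath\'eodory theorem, and $N_{P_r}>3r-5$ by a parameter count made rigorous via parametric transversality; in each half genericity enters at exactly one place. For the upper bound, let $\omega\in SO(2)$ be rotation by $2\pi/r$ and homogenize each point to $\hat p_j=(p_j,1)\in\mathbb R^2\oplus\mathbb R=\mathbb R^3$. Let
\[
L=\big\{(c+\omega^{i-1}v)_{i=1}^r : c,v\in\mathbb R^2\big\}\subseteq(\mathbb R^2)^r
\]
be the space of cyclic vertex-lists of all regular $r$-gons in the plane, \emph{including} the degenerate constant ones (those with $v=0$); one checks $\dim L=4$. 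Put $L'=\big\{\big((\beta_i',b)\big)_{i=1}^r:(\beta_i')_i\in L,\ b\in\mathbb R\big\}\subseteq(\mathbb R^3)^r$, a subspace of dimension $5$, and $U=(\mathbb R^3)^r/L'$, so $\dim U=3r-5$. Let $T\colon(\mathbb R^3)^r\to U$ be the quotient map and $S_i\colon\mathbb R^3\to U$ its restriction to the $i$-th coordinate factor, so $T(\eta_1,\dots,\eta_r)=\sum_{i=1}^r S_i(\eta_i)$. The decisive point is that the diagonal $\{(\eta,\dots,\eta):\eta\in\mathbb R^3\}$ lies in $L'$ — a constant sequence is a degenerate regular $r$-gon — whence $\sum_{i=1}^r S_i=0$ and so $0=\tfrac1r\sum_{i=1}^rS_i(\hat p_j)$ for each $j$. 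With $N:=3r-4=\dim U+1$, each of the $N$ colour classes $V_j=\{S_i(\hat p_j):1\le i\le r\}\subseteq U$ therefore has $0$ in its convex hull, and colorful Carath\'eodory yields indices $i(1),\dots,i(N)$ and weights $\alpha_j\ge 0$ with $\sum_j\alpha_j=1$ and $\sum_j\alpha_jS_{i(j)}(\hat p_j)=0$.

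I would then unpack this exactly as in Sarkaria's proof of Tverberg's theorem. Setting $A_i=\{j:i(j)=i\}$ and $Y_i=\sum_{j\in A_i}\alpha_j\hat p_j=(Y_i',b_i)\in\mathbb R^2\oplus\mathbb R$, the identity $T(Y_1,\dots,Y_r)=0$ says $(Y_i)_i\in L'$, so all $b_i$ equal a common $b$ and $(Y_i')_i\in L$. Summing $b_i=\sum_{j\in A_i}\alpha_j$ over $i$ gives $rb=1$, so $b=\tfrac1r>0$; hence every $A_i\ne\emptyset$, $(A_1,\dots,A_r)$ is a genuine partition of $\{1,\dots,N\}$, and $x_i:=rY_i'=\sum_{j\in A_i}(r\alpha_j)p_j\in\Conv(A_i)$. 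Since $(Y_i')_i\in L$ we may write $Y_i'=c+\omega^{i-1}v$, so $(x_1,\dots,x_r)=\big(rc+\omega^{i-1}(rv)\big)_i$ is the cyclic vertex-list of a regular $r$-gon — a nondegenerate one precisely when $v\ne 0$. The sole use of genericity is to rule out $v=0$: in that case $x_1=\dots=x_r$, i.e.\ $(A_1,\dots,A_r)$ would be a classical Tverberg $r$-partition of our $3r-4$ points, impossible when the points are in strong general position~\cite{PS14} — a dense open full-measure condition — since $3r-4<3r-2=t(r,2)$. This proves $N_{P_r}\le 3r-4$.

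For the lower bound, observe that a $P_r$-partition of $p_1,\dots,p_{3r-5}$ is an ordered set partition $(A_1,\dots,A_r)$ together with a point $(c,v)\in\mathbb R^4$ with $c+\omega^{i-1}v\in\Conv(A_i)$ for all $i$, and generically the $i$-th condition is a closed condition on $(c,v)$ of codimension $0$, $1$, or $2$ according as $|A_i|\ge 3$, $|A_i|=2$, or $|A_i|=1$. If $m$ and $s$ count the parts of size $2$ and $1$, then $\sum_i|A_i|=3r-5$ forces $m+2s\ge 5$, so the combined codimension exceeds $4=\dim\mathbb R^4$ and the system is over-determined. I would make this rigorous by parametric transversality: stratifying by which incidences are interior, the corresponding piece of the incidence variety $\{(p,c,v,\mu)\}$ — where $\mu$ records auxiliary convex parameters for the segment-incidences — is cut out by equations submersive in the point coordinates, since distinct ``small'' parts involve pairwise disjoint subsets of $\{p_1,\dots,p_{3r-5}\}$ and so the relevant points may be perturbed independently; hence by Sard's theorem its projection to $(\mathbb R^2)^{3r-5}$ has measure zero, and, being also closed, has dense open complement. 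A finite union over ordered set partitions shows that a generic set of $3r-5$ points admits no $P_r$-partition, giving $N_{P_r}>3r-5$.

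The crux here is conceptual rather than computational: identifying the correct ``test space.'' The naive configuration-space/test-map approach — via the $2$-wise deleted join $(\Delta_{N-1})^{*r}_{\Delta(2)}$ with its $\mathbb Z_r$-action — seems hard to push through for composite $r$, since the relevant $\mathbb Z_r$-action on the target sphere is then not free, mirroring the failure of the topological Tverberg theorem for non-prime-powers. The Sarkaria reduction bypasses this. Its one nontrivial idea is to quotient $(\mathbb R^{d+1})^r$ not by the diagonal — which merely recovers Tverberg's theorem — but by the larger subspace $L'$ of all homogenized, translated orbit polytopes, \emph{while keeping $L'\supseteq$ diagonal} so that $\sum_iS_i=0$ and colorful Carath\'eodory still fires; this lowers the point requirement by $\dim L'-(d+1)$, here $5-3=2$. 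Everything else is the routine — but necessary — genericity bookkeeping above, which simultaneously produces the matching lower bound.
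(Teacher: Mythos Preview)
Your proof is correct and follows essentially the same route as the paper: both reduce to colorful Carath\'eodory (equivalently, Sarkaria's linear Borsuk--Ulam theorem) by quotienting $(\mathbb{R}^{d+1})^r$ by the subspace of homogenized translated orbits---your $L'$ is exactly the paper's $V'\oplus(\text{trivial line in }\mathbb{R}[G])$, and your $U$ has the same dimension $3r-5$ as the paper's $W\oplus\mathbb{R}^\perp[G]$---and both handle the lower bound by a degrees-of-freedom count.

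The one substantive difference is how you rule out the degenerate solution $v=0$. The paper's Lemma~3.1 argues that any nontrivial stabilizer condition on $u$ imposes an extra independent linear constraint on the barycentric coefficients, overdetermining the system. Your argument is cleaner in this planar cyclic case: $v=0$ forces a genuine Tverberg $r$-partition of only $3r-4<t(r,2)$ points, which strong general position forbids outright. This shortcut works precisely because the $\mathbb{Z}_r$-action on $\mathbb{R}^2\setminus\{0\}$ is already free, so the only degeneration is $u=0$; the paper's argument is needed in the general setting of Theorem~1.2, where nontrivial stabilizers can occur for $u\neq 0$.
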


 \begin{figure}
  \includegraphics[scale=.7]{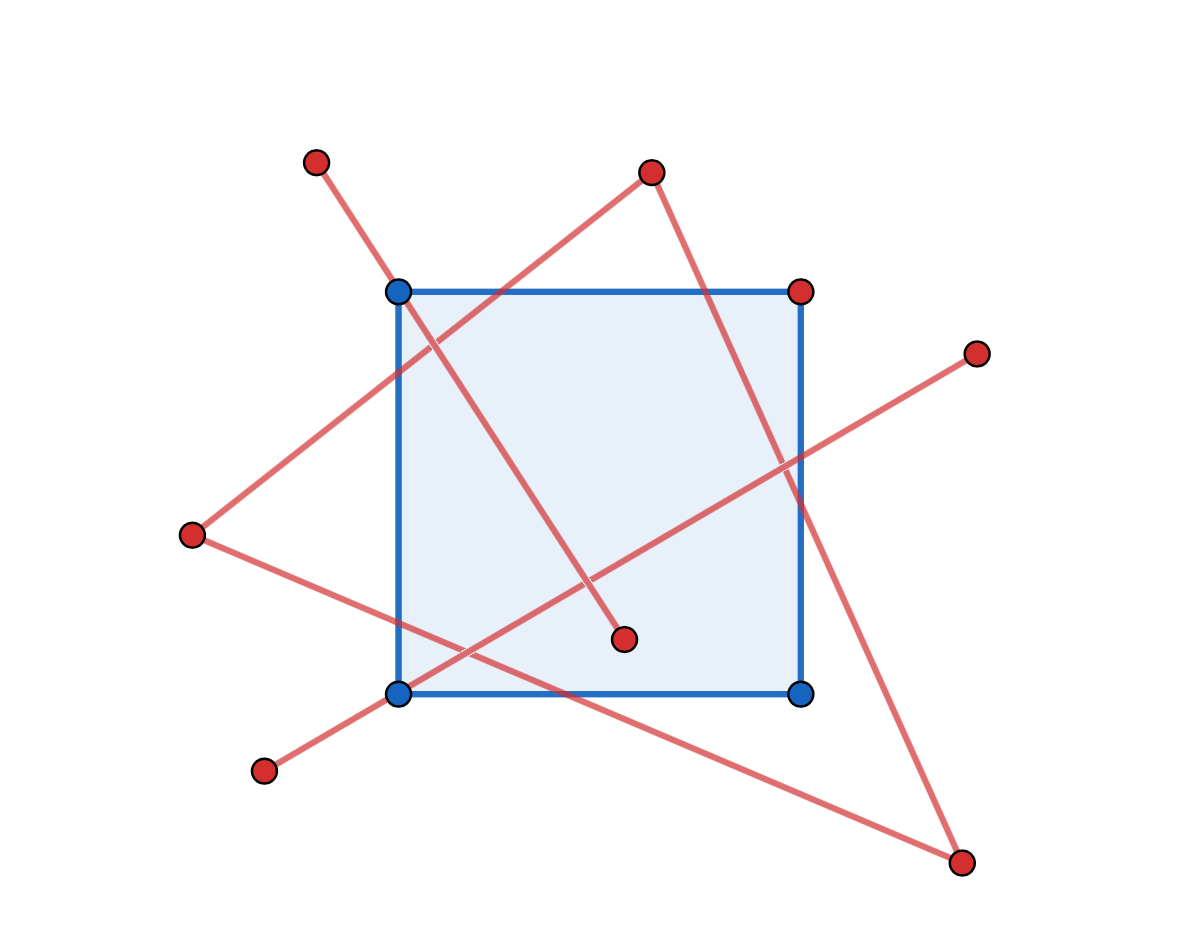}
  \caption{A $P_4$--partition of 8 (red) points in the plane.}
  \label{fig:pic}
\end{figure}

\subsection{Prescribed symmetries and orbit polytopes}

The aim of the present paper is to extend Theorem~\ref{thm:polygon} to $d$-polytopes with prescribed symmetry conditions. Observing that the cyclic group $\mathbb{Z}_r$ acts regularly (i.e., freely and transitively) on the vertex set of a regular $r$-gon, our central result Theorem~\ref{thm:orbit} affords polytopal partitions by $d$-polytopes whose isometry group contains an arbitrarily prescribed subgroup, and in fact one which acts regularly on the vertex set via a prescribed $d$-dimensional orthogonal representation. 

Let $\rho\colon G\rightarrow O(d)$ be a faithful orthogonal representation of a finite group. Let $u\in \mathbb{R}^d\setminus\{0\}$ and let $a\in \mathbb{R}^d$. By an \textit{orbit polytope}
$$P(G;u,a)=\Conv(a+G\cdot u)$$ for $\rho$, we shall mean the convex hull of the translate of a $G$-orbit under the orthogonal action afforded by the representation. Such polytopes are very well--studied (see, e.g.~\cite{Ba77,CrKe06,FL16,FL18,On93,SSS11}), particularly so for finite reflection groups~\cite{BaVe88,BoBo10,Ho12,HoLaTh11}, and have origins dating back to the classical Wythoff construction for uniform polytopes and the study of Coxeter groups~\cite{Cox34}. As  in~\cite[Section 2]{FL16}, the vertex set of $P(G;u,a)$ is by definition contained in $a+G\cdot u$ while the action of any $g\in G$ determines an isometry of $P(G;u,a)$, so the vertex set of $P(G;u,a)$ is precisely $a+G\cdot u$. In particular, any orbit polytope is vertex--transitive, and $G\le \Iso (P(G; u, a))$ via the action of the representation. 

For our purposes, we shall consider those representations which are \textit{full--dimensional}, that is, those for which there exists some orbit $G\cdot u$ whose affine hull is all of $\mathbb{R}^d$. Letting $\mathbb{R}[G]=\{\sum_{g\in G} r_g g\mid r_g\in \mathbb{R}\}$ denote the (left) regular representation and taking the module-theoretic perspective, it can be shown that a representation is full--dimensional precisely when it is isomorphic to a subrepresentation of $\mathbb{R}[G]$  which does not contain a trivial  subrepresentation~\cite[Theorem 4.3]{FL16}. Finally, we shall say that $P(G;u,a)$ is a \textit{free orbit polytope} if $u$ has trivial stabilizer, in which case the vertex set is identifiable with the group $G$ under the action of the representation. For example, if $\rho\colon \mathbb{Z}_r\rightarrow O(2)$ is the standard representation of the cyclic group $\mathbb{Z}_r$ given by rotations, then $\rho$ is full dimensional, every orbit polytope is free, and the resulting orbit polytopes are regular $r$-gons in the plane.

 \begin{theorem}
 \label{thm:orbit} Let $\rho\colon G\rightarrow O(d)$ be a faithful and full--dimensional representation of a finite group $G$ of order $r\geq 3$. Then a generic set of $t(r,d)-d=(r-2)(d+1)+2$ points in $\mathbb{R}^d$ can be $P(r,d)$-partitioned by a free orbit polytope for $\rho$. Moreover, such a partition fails for generic collections of fewer points in $\mathbb{R}^d$. \end{theorem}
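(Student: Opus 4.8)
The plan is to recast inscription as a linear condition, obtain the positive half from a Sarkaria-type reduction to the colorful Carath\'eodory theorem (an affine result, hence free of prime-power restrictions — as is needed, the theorem being claimed for all $r$), and obtain optimality from a transversality count.

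\textbf{Step 1 (Reformulation).} Index $G=\{g_1,\dots,g_r\}$ and view an $r$-tuple $(z_g)_{g\in G}\in(\mathbb{R}^d)^G$ as a candidate labeled vertex set. It is a translate $a+G\cdot u$ of a $\rho$-orbit precisely when $z_g=a+\rho(g)u$ for all $g$, that is, when it lies in
\[
L=D\oplus U,\qquad D=\{(a,\dots,a):a\in\mathbb{R}^d\},\qquad U=\{(\rho(g)u)_{g\in G}:u\in\mathbb{R}^d\}.
\]
Full-dimensionality means $\rho$ has no trivial subrepresentation \cite[Thm.~4.3]{FL16}, so $D\cap U=0$, $\sum_{g\in G}\rho(g)=0$, $\dim L=2d$, and $L$ is invariant under the regular permutation action of $G$ on the coordinates. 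Set $W=(\mathbb{R}^d)^G/L$, a $G$-representation with $\dim W=d(r-2)$, and let $q\colon(\mathbb{R}^d)^G\twoheadrightarrow W$ be the quotient. It then suffices to produce a partition $(A_g)_{g\in G}$ of the $N=t(r,d)-d$ points with every $A_g\neq\varnothing$ and points $z_g\in\Conv(A_g)$ satisfying $(z_g)_{g}\in L$; that the resulting orbit $G\cdot u$ is free and that $\Aff(G\cdot u)=\mathbb{R}^d$, so that $a+G\cdot u$ is genuinely the vertex set of a free $d$-dimensional orbit polytope, are open dense conditions on $u$ handled at the end.

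\textbf{Step 2 (The positive half).} Let $e_{g_1},\dots,e_{g_r}$ be the vertices of a regular simplex centered at the origin of $\mathbb{R}^{r-1}$, so that $\sum_g e_g=0$ is their unique linear relation, and let $q_g\colon\mathbb{R}^d\to W$ be the $g$-th coordinate of $q$. To each point $x_i$ and each $g\in G$ attach
\[
v_{i,g}=\bigl(e_g,\,q_g(x_i)\bigr)\in\mathbb{R}^{r-1}\oplus W.
\]
Here $\dim(\mathbb{R}^{r-1}\oplus W)=(r-1)+d(r-2)=N-1$, and $\tfrac1r\sum_{g}v_{i,g}=0$ since the constant tuple $(x_i,\dots,x_i)$ lies in $D\subseteq\ker q$; thus $0$ lies in the convex hull of each of the $N=(N-1)+1$ color classes $S_i=\{v_{i,g}:g\in G\}\subset\mathbb{R}^{N-1}$. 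The colorful Carath\'eodory theorem, invoked at exactly its threshold, then yields a rainbow selection $\gamma\colon[N]\to G$ and weights $\lambda_i\ge0$, $\sum_i\lambda_i=1$, with $\sum_i\lambda_iv_{i,\gamma(i)}=0$. The $\mathbb{R}^{r-1}$-coordinate forces $\sum_{i:\gamma(i)=g}\lambda_i$ to be independent of $g$, hence equal to $1/r>0$, so each $A_g:=\gamma^{-1}(g)$ is nonempty and $z_g:=\sum_{i\in A_g}(r\lambda_i)x_i\in\Conv(A_g)$; the $W$-coordinate gives $q\bigl((z_g)_{g}\bigr)=r\,q\bigl((\textstyle\sum_{i\in A_g}\lambda_ix_i)_{g}\bigr)=0$, i.e.\ $(z_g)_{g}\in L$. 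This is the required polytopal partition. The substantive point — the step one must get right — is precisely this bookkeeping: after the tensor substitution the inscription constraint ``$(z_g)_g\in L$'' occupies exactly $N-1$ ambient dimensions, so colorful Carath\'eodory fires sharply, which is also what sidesteps the prime-power obstruction afflicting arguments through the topological Tverberg theorem.

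\textbf{Step 3 (Non-degeneracy and optimality).} For a generic configuration the orbit produced is non-degenerate: since a generic set of $N<t(r,d)$ points admits no Tverberg $r$-partition, the recovered $u=\sum_i\lambda_i\rho(\gamma(i))^{-1}x_i$ is nonzero, and running over the finitely many combinatorial types of the colorful Carath\'eodory output, the configurations for which the associated $u$ has nontrivial $\rho(G)$-stabilizer or fails $\Aff(G\cdot u)=\mathbb{R}^d$ form a null set; as the selected rainbow combination is transversal for generic input, the polytopal partition survives small perturbations, so the favorable configurations are dense open of full measure. For optimality, fix $N'=t(r,d)-d-1$ points and an ordered partition $\pi=(A_1,\dots,A_r)$ into nonempty blocks; on the variety of tuples $(x;a,u,\lambda)$ with each $\lambda^{(g)}$ a weight vector on $A_g$ and $\sum_{i\in A_g}\lambda^{(g)}_ix_i=a+\rho(g)u$ for all $g$, fixing $(a,u,\lambda)$ one solves each block equation for one coordinate $x_{i_0(g)}$ — the $i_0(g)$ distinct since the blocks are disjoint — so the variety has dimension at most $[\,2d+(N'-r)\,]+d(N'-r)=dN'-1$, whence its projection to $x$-space (which contains every configuration admitting a polytopal partition of type $\pi$) has measure zero. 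The finite union over all $\pi$, together with the degenerate case $u=0$ (which would force an impossible Tverberg $r$-partition), is moreover closed: for $x$ in any compact set the weights $\lambda$ lie in a fixed compact polytope while $ra=\sum_g z_g$ and $u=z_e-a$ stay bounded. Its complement is open, dense, of full measure, so a generic set of at most $t(r,d)-d-1$ points is not $P(r,d)$-partitionable at all. The delicate remaining points are the genericity argument for non-degeneracy and this closedness claim; both are routine but should be spelled out.
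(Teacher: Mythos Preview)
Your proof is correct and follows essentially the same approach as the paper: both encode the inscription condition as membership in the $2d$-dimensional $G$-invariant subspace $L=D\oplus U$ of $(\mathbb{R}^d)^G$ and then reduce to colorful Carath\'eodory in dimension $(r-1)+d(r-2)=N-1$ --- the paper via Sarkaria's Linear Borsuk--Ulam theorem applied to an affine equivariant map $G^{*N}\to W\oplus\mathbb{R}^\perp[G]$, you by unpacking that theorem into the tensor trick directly --- with optimality and non-degeneracy handled by the same transversality/dimension counts (the paper's Lemma~3.1). The only cosmetic difference is that the paper realizes the test space as the orthogonal complement $W=W_0\cap W_\rho\subset\mathbb{R}^d[G]$ while you take the isomorphic quotient $(\mathbb{R}^d)^G/L$.
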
 
 
Note that Theorem~\ref{thm:orbit} applies to any finite group $G$, as one may consider the faithful and full-dimensional representation $\mathbb{R}^\perp[G]=\{\sum_{g\in G} r_g g \mid \sum_{g\in G} r_g=0\}$, the orthogonal complement of the trivial (diagonal) representation inside the regular representation $\mathbb{R}[G]$. In this case the free orbit polytopes are vertex and facet transitive $(|G|-1)$--dimensional simplicies with regular $G$ action on the vertex set. 
 
 \subsection{Summary of Examples} We provide some special cases of Theorem~\ref{thm:orbit}, which in particular gives the upper bounds  $N_{P(r,d)}\leq (r-2)(d+1)+2$ for a number of classical vertex-transitive $d$-polytopes. A detailed discussion of these examples is given in Section~\ref{sec:4}. 
 
 First, Theorem~\ref{thm:polygon} is recovered by considering the standard representation of finite cyclic groups as rotations in the plane. These groups can be identified with the unique finite subgroups of the unit complex numbers, and a similar analysis for the non-cyclic finite subgroups of the unit quaternions will be shown to yield polytopal partitions for three of the six regular 4-polytopes. As discussed in Section \ref{sec:4.1}, we expect each of the following estimates to exceed the exact value by 3. 
 \begin{theorem}
\label{thm:4poly}

Let $P_{\{3,3,4\}}$, $P_{\{3,4,3\}}$, and $P_{\{3,3,5\}}$ denote the regular 4-dimensional cross--polytope, regular 24--cell, and regular 600--cell, respectively. Then\\ 
(a) $N_{\{3,3,4\}}\leq 32$,\\
(b) $N_{\{3,4,3\}}\leq 112$, and\\
(c) $N_{\{3,3,5\}} \leq 592$.
\end{theorem}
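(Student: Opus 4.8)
The plan is to deduce all three inequalities from Theorem~\ref{thm:orbit}, applied to the left--multiplication actions of the three non--cyclic binary polyhedral groups of the quaternions. Identify $\mathbb{R}^4$ with the quaternion algebra $\mathbb{H}$, so that the unit sphere $S^3\subset\mathbb{H}$ is a group, and take $G$ to be the quaternion group $Q_8=\{\pm 1,\pm i,\pm j,\pm k\}$ of order $8$, the binary tetrahedral group $2T$ of order $24$, or the binary icosahedral group $2I$ of order $120$. Let $\rho_G\colon G\rightarrow O(4)$ be the representation $g\mapsto L_g$, where $L_g(x)=gx$; it visibly lands in $SO(4)$ and is faithful, since $gx=x$ for all $x$ forces $g=1$. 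Since $gu=u$ already forces $g=1$ once $u\neq 0$, the representation $\rho_G$ acts freely on $\mathbb{H}\setminus\{0\}$, so \emph{every} orbit polytope for $\rho_G$ is a free orbit polytope. It is also full--dimensional: the orbit of $u=1$ is the underlying point set of $G$, and $\Conv(G)$ will be seen below to be a $4$--dimensional (regular) polytope, so $\Aff(G\cdot 1)=\mathbb{R}^4$ (alternatively, one can invoke the module--theoretic criterion, noting that $\mathbb{H}$ is the real form of the defining quaternionic irreducible of $G$ and occurs in $\mathbb{R}[G]$ with no trivial constituent).

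The one point requiring a small argument is that, for these representations, \emph{every} free orbit polytope is in fact similar to the single ``group polytope'' $\Conv(G)$. Indeed, given $u\neq 0$, set $\hat u=u/|u|\in S^3$; then $G\cdot u=\{g(|u|\hat u):g\in G\}=|u|\cdot R_{\hat u}(G)$, where $R_{\hat u}(x)=x\hat u$ is right multiplication by a unit quaternion, hence an element of $O(4)$. Applying the orthogonal map $R_{\hat u}$, then scaling by $|u|$ and translating by $a$, gives $P(G;u,a)=a+|u|\cdot R_{\hat u}(\Conv(G))$, which is similar to $\Conv(G)$. Now identify the three group polytopes with their classical descriptions: $\Conv(Q_8)$ is the regular cross--polytope $P_{\{3,3,4\}}$ (its vertices being $\pm e_1,\ldots,\pm e_4$ under the identification of $\mathbb{H}$ with $\mathbb{R}^4$); $\Conv(2T)$, the convex hull of the $24$ unit Hurwitz quaternions $\{\pm 1,\pm i,\pm j,\pm k\}\cup\{\tfrac{1}{2}(\pm 1\pm i\pm j\pm k)\}$, is the regular $24$--cell $P_{\{3,4,3\}}$; and $\Conv(2I)$, the convex hull of the $120$ unit icosians, is the regular $600$--cell $P_{\{3,3,5\}}$.

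With these ingredients the proof finishes by bookkeeping. Theorem~\ref{thm:orbit} applied to $\rho_G$ with $d=4$ and $r=|G|\in\{8,24,120\}$ provides, for a generic set of $t(r,4)-4=(r-2)\cdot 5+2$ points in $\mathbb{R}^4$, a partition together with representative points forming the vertex set of some free orbit polytope for $\rho_G$; by the previous paragraph that polytope is similar to $P_{\{3,3,4\}}$, $P_{\{3,4,3\}}$, or $P_{\{3,3,5\}}$. Evaluating $(r-2)\cdot 5+2$ at $r=8,24,120$ yields $32$, $112$, and $592$, which gives $N_{\{3,3,4\}}\leq 32$, $N_{\{3,4,3\}}\leq 112$, and $N_{\{3,3,5\}}\leq 592$.

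The components that are not purely formal, and where I would be most careful, are the similarity reduction of the second paragraph and the two classical identifications $\Conv(2T)=P_{\{3,4,3\}}$ and $\Conv(2I)=P_{\{3,3,5\}}$. It is exactly the quaternionic rigidity ``$G\cdot u=|u|\,R_{\hat u}(G)$'' that pins the orbit polytope down to the regular type and thereby singles out these three of the six regular $4$--polytopes; the analogous reduction fails, for instance, for the sign--change action of $\mathbb{Z}_2^4$ on $\mathbb{R}^4$, whose generic orbit is a rectangular box rather than a cube, which is why the tesseract (and, similarly, the simplex and the $120$--cell) is not obtained by this method.
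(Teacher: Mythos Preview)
Your proof is correct and follows essentially the same approach as the paper: apply Theorem~\ref{thm:orbit} to the left--multiplication representations of $Q_8$, $2T\,(=T^*)$, and $2I\,(=I^*)$ on $\mathbb{H}\cong\mathbb{R}^4$, use the quaternionic identity $G\cdot u=|u|\,R_{\hat u}(G)$ to see that every free orbit polytope is similar to $\Conv(G)$, and then invoke the classical identifications of $\Conv(G)$ with the cross--polytope, $24$--cell, and $600$--cell. If anything, you spell out the similarity reduction more explicitly than the paper, which simply records that the free orbit polytopes are ``the translated and scaled $\mathbb{F}$--unitary copies of $\Conv(G)$.''
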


In the examples above, the isometry group of the orbit polytope is larger than the group prescribed. Whether or not this is true in general depends on the group $G$ as well as the given representation. If $G$ is either (i) a finite abelian group with exponent greater than 2 or (ii) generalized dicyclic (see, e.g., ~\cite{Ba77,MSV15}), then $\Iso(P(G;u,a))>G$ for any free orbit polytope determined by any representation. On the other hand, a theorem of Babai~\cite{Ba77} shows that a group is realizable as the isometry group of a vertex--transitive $d$-polytope precisely when it is not of the form (i) or (ii). Using results of~\cite{Fr18, FL16, FL18},  we can ensure that $\Iso(P(G;u,a))=G$ in Theorem~\ref{thm:orbit} in these later cases.

\begin{theorem}
\label{thm:closed} 
Let $d\geq 2$, let $G$ be the isometry group of a vertex transitive $d$-polytope, and let $r=|G|$. Then a generic set of $(r-2)(d+1)+2$ points in $\mathbb{R}^d$ can be $P(r,d)$-partitioned by a vertex transitive $d$-polytope with $\Iso(P(r,d))=G$. 
\end{theorem}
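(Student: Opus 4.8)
The plan is to deduce Theorem~\ref{thm:closed} from Theorem~\ref{thm:orbit} by applying the latter to a well-chosen representation and then arranging that the free orbit polytope it produces has isometry group \emph{equal} to $G$ rather than merely containing $G$. First, fix a vertex-transitive $d$-polytope $Q$ with $\Iso(Q)=G$ and translate it so that its isometry-fixed centroid is the origin; this realizes $G$ as a finite subgroup of $O(d)$, and since the vertices of $Q$ form a single $G$-orbit affinely spanning $\mathbb{R}^d$, it provides a faithful full-dimensional representation of $G$ in dimension $d$. Being the full isometry group of a vertex-transitive polytope, $G$ is by Babai's theorem~\cite{Ba77} neither abelian of exponent greater than $2$ nor generalized dicyclic; under precisely this restriction, the analysis of isometric symmetry groups of orbit polytopes in~\cite{Fr18,FL16,FL18} provides a faithful full-dimensional representation $\rho\colon G\to O(d)$ (the one afforded by $Q$, possibly after a modification available there) for which a \emph{generic} free orbit polytope has isometry group exactly $G$. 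Equivalently, the set $B\subseteq\mathbb{R}^d\times\mathbb{R}^d$ of parameters $(u,a)$ for which $u$ has nontrivial stabilizer or $\Iso(P(G;u,a))\neq G$ has Lebesgue measure zero and is closed---in fact a finite union of proper real-algebraic subsets, since an isometry of $P(G;u,a)$ is determined by its permutation of the at most $r$ vertices, leaving finitely many possibilities for the permutation group $\Iso(P(G;u,a))$.

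With $\rho$ in hand, Theorem~\ref{thm:orbit} gives that a generic set of $N=(r-2)(d+1)+2$ points in $\mathbb{R}^d$ can be $P(r,d)$-partitioned by some free orbit polytope $P(G;u,a)$ for $\rho$, a polytope on $r=|G|$ vertices; it remains to arrange $(u,a)\notin B$. Writing $\mathcal A\subseteq(\mathbb{R}^d)^N\times\mathbb{R}^d\times\mathbb{R}^d$ for the semialgebraic set of triples $(x_1,\dots,x_N;u,a)$ for which $P(G;u,a)$ is a free orbit polytope of $\rho$ realizing such a partition of $\{x_1,\dots,x_N\}$, Theorem~\ref{thm:orbit} says that $\mathcal A$ projects onto a dense open full-measure subset of $(\mathbb{R}^d)^N$. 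I would combine this with the proof of Theorem~\ref{thm:orbit} itself, which---being topological in nature, and (for a generic configuration) allowing each selected point to be taken in the interior of its convex hull---should show that for every configuration in a dense open full-measure set the admissible parameters $(u,a)$ form a set with nonempty interior in $\mathbb{R}^d\times\mathbb{R}^d$. Since $B$ has measure zero, the admissible set then meets its complement, so for such a configuration there is an admissible $(u,a)\notin B$; the associated free orbit polytope $P(r,d)=P(G;u,a)$ is a vertex-transitive $d$-polytope on $r$ vertices with $\Iso(P(r,d))=G$, establishing Theorem~\ref{thm:closed}.

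The step I expect to be the main obstacle is making that last robustness precise: extracting from the proof of Theorem~\ref{thm:orbit} a genuinely $2d$-dimensional family of admissible parameters for a generic configuration, so that it cannot be swallowed by the measure-zero locus $B$. If only a lower-dimensional or even isolated family of admissible parameters came out of that proof, one would instead have to argue that the incidence set $\mathcal A$ has no top-dimensional component lying over $B$---for which the non-emptiness of $\mathcal A\setminus((\mathbb{R}^d)^N\times B)$, witnessed by a configuration whose first $r$ points are the vertices $v_1,\dots,v_r$ of a free orbit polytope with isometry group $G$ (partitioned with $v_i$ in block $i$ and $x_i=v_i$), is a first step but not the whole story. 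A minor but real subtlety is that~\cite{Fr18,FL16,FL18} must be invoked for the isometric and not merely the affine symmetry group: for orbit polytopes the latter can be strictly larger, as already for a non-square rectangle, a free orbit polytope whose isometry group $\mathbb{Z}_2\times\mathbb{Z}_2$ equals $G$ in that case but whose affine symmetry group is dihedral of order $8$.
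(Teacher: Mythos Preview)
Your overall strategy---apply Theorem~\ref{thm:orbit} to the inclusion $G\hookrightarrow O(d)$ and then invoke the ``generic symmetry'' results of~\cite{Fr18,FL16,FL18}---is exactly the paper's, but the mechanism you propose for the last step will not work, and the gap you yourself flag is real.

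The hope that generic configurations admit a $2d$-dimensional family of admissible $(u,a)$ is false at this value of $N$. The test map $L\colon G^{\ast N}\to W\oplus\mathbb{R}^\perp[G]$ constructed in the proof of Theorem~\ref{thm:affine} goes between spaces of the \emph{same} dimension $N-1=d(r-2)+(r-1)$, so for generic $f$ its zero set is finite. Equivalently: with $N=(r-2)(d+1)+2$ points partitioned into $r$ blocks, the total barycentric freedom in the $x_g$'s is at most $N-r=(r-2)d$, while the constraints $f(x_g)=a+g\cdot u$ are $rd$ equations in $2d$ unknowns $(a,u)$; the count is tight, so the admissible $(u,a)$'s are generically isolated. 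Your ``interior of the convex hull'' intuition gives no slack here precisely because $N$ is optimal.

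The paper sidesteps this entirely. Rather than perturbing $(u,a)$ off a bad locus $B$, it proves a companion to Lemma~\ref{lem:generic} (Lemma~\ref{lem:iso}) showing that for generic $f$, \emph{every} zero of $L$ already yields a $u$ with $\OSym(G,u)=\OSym(G)$. The argument is the same dimension count used for optimality: membership of a permutation $\pi$ in $\OSym(G,u)$ is governed by the quadratic polynomials $P^\pi_{g,h}(u)=\langle g\cdot u,h\cdot u\rangle-\langle\pi(g)\cdot u,\pi(h)\cdot u\rangle$, and for $\pi\notin\OSym(G)$ at least one of these is not identically zero; pulled back along $u(s)=\sum_j s_j g_j^{-1}\cdot f(v_j)$ it gives a condition on $t\in\Sigma_{N-1}$ that, for generic $f(v_j)$, is independent of the $N-1$ linear conditions imposed by $L(v)=0$. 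Thus no extra symmetry can occur. The final step is then purely algebraic: the inclusion $G\hookrightarrow O(d)$ is linearly closed by~\cite[Theorem~A]{FL16}, hence orthogonally closed, so $\OSym(G)=G$ and therefore $\Iso(P(G;u,a))=G$.

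Your closing remark about isometric versus affine symmetry is well taken; the paper handles it exactly by the implication ``linearly closed $\Rightarrow$ orthogonally closed,'' so that the affine result of~\cite{FL16} suffices.
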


 Notable examples of Theorem~\ref{thm:closed} include polytopal partitions for all Coxeter permutahedra, that is, the free orbit polytopes determined by the finite reflection groups (see Corollary~\ref{cor:Coxeter}). Considering dimension 3 along with other finite subgroups of the orthogonal group $O(3)$, we obtain polyhedral partitions for all right regular prisms and antiprisms, as well as for certain almost uniform copies of some Platonic and Archimedean solids. A complete discussion of the similarity--type of these polytopes is given in Section~\ref{sec:4.3}. 
 
 \begin{theorem}
 \label{thm:3poly} Let $D_n$ denote the Dihedral group and let $S_n$ and $A_n$ denote the  symmetric and alternating groups on $n$ elements, respectively.\\
(a) $N_{Pr(n)}\leq 8n-6$ for all $n\geq 3$, where $Pr(n)$ is a right regular prism with regular $n$-gon base; $\Iso(Pr(n))\cong D_n\times \mathbb{Z}_2$.\\
 (b) $N_{APr(n)}\leq 8n-6$ for all $n\geq 2$, where $APr(n)$ is a right regular antiprism with regular $n$-gon base; $\Iso(APr(n))\cong D_{2n}$.\\
 (c) $N_{tO}\leq 90$, where $tO$ is a truncated octahedron; $\Iso(tO)\cong S_4$.\\
(d) $N_{tCO}\leq 186 $, where $tCO$ is a truncated cuboctahedron; $\Iso(tCO)\cong S_4\times \mathbb{Z}_2$.\\
(e) $N_{tID}\leq 474$, where $tID$ is a truncated icosidodecahedron; $\Iso(tID)\cong A_5\times \mathbb{Z}_2$.\\
(f) $N_I\leq 42$, where $I$ is an icosahedron; $\Iso(I)\cong A_4$.\\
(g) $N_{sC}\leq 90$, where $sC$ is a snub cube; $\Iso(sC)\cong S_4$.\\
(h) $N_{sD}\leq 234$, where $sD$ is a snub dodecahedron; $\Iso(sD)\cong A_5$. 
\end{theorem}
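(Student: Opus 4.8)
The plan is to read off each bound as the value $(r-2)(d+1)+2$ of Theorem~\ref{thm:orbit} with $d=3$ and $r$ equal to the number of vertices of the polytope in question; note $(r-2)(3+1)+2=4r-6$, which equals $8n-6$ when $r=2n$ (cases (a), (b)) and equals $90,\,186,\,474,\,42,\,90,\,234$ when $r=24,\,48,\,120,\,12,\,24,\,60$ (cases (c)--(h)). So the entire content is to realize each listed polytope as (a copy, of the similarity type discussed in Section~\ref{sec:4.3}, of) a free orbit polytope $P(G;u,a)$ for an explicit faithful, full--dimensional representation $\rho\colon G\to O(3)$ with $|G|=r$, and then to decide, from the isometry group of the relevant orbit polytope, whether to apply Theorem~\ref{thm:closed} (when $\Iso(P(G;u,a))=G$) or Theorem~\ref{thm:orbit} (when $\Iso(P(G;u,a))>G$).

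The representations are the expected ones. For (a), take $G=\mathbb{Z}_n\times\mathbb{Z}_2$ ($n\ge 3$) acting by the rotations about a fixed axis (the $\mathbb{Z}_n$ factor) together with the reflection in the orthogonal plane (the $\mathbb{Z}_2$ factor); a generic orbit is a right prism over a regular $n$--gon of arbitrary aspect ratio, whose isometry group is $D_n\times\mathbb{Z}_2>G$. For (b), take $G=\mathbb{Z}_{2n}$ ($n\ge 2$) acting through the improper rotation composing rotation by $\pi/n$ about an axis with reflection in the orthogonal plane; a generic orbit is a right antiprism over a regular $n$--gon, with isometry group $D_{2n}>G$. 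For (c), (d), (e), take $G$ to be the finite reflection group $W(A_3)\cong S_4$, $W(B_3)\cong S_4\times\mathbb{Z}_2$, $W(H_3)\cong A_5\times\mathbb{Z}_2$ acting by its geometric reflection representation (restricted, for $A_3$, to the $3$--plane $\{\sum x_i=0\}\subset\mathbb{R}^4$); the orbit of an interior point of a Weyl chamber is the corresponding Coxeter permutahedron --- a truncated octahedron, a truncated cuboctahedron, or a truncated icosidodecahedron --- and here $\Iso(tO)$ is the full octahedral group $S_4\times\mathbb{Z}_2>W(A_3)$ (the $A_3$--permutahedron is centrally symmetric while $-I\notin W(A_3)$), whereas $\Iso(tCO)=W(B_3)$ and $\Iso(tID)=W(H_3)$ equal $G$. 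Finally, for (f), (g), (h), take $G$ to be the chiral rotation group $A_4$, $S_4$, $A_5$ of the tetrahedron, cube, and dodecahedron, acting by its inclusion into $SO(3)$; a generic orbit is combinatorially an icosahedron, a snub cube, or a snub dodecahedron, with isometry group exactly $G$ for generic $u$. In all cases $\rho$ is faithful, and full--dimensionality follows from~\cite[Theorem 4.3]{FL16}: the representations in (c)--(h) are irreducible and nontrivial, while those in (a)--(b) split as the planar rotation representation plus the order--two sign character on the axis, which has no trivial constituent for $n\ge 3$ in (a) and $n\ge 2$ in (b). The orders $|A_4|=12$, $|S_4|=24$, $|A_5|=60$, $|S_4\times\mathbb{Z}_2|=48$, $|A_5\times\mathbb{Z}_2|=120$ match the vertex counts, and in the cases that invoke Theorem~\ref{thm:closed} the group $G$ is neither abelian of exponent $>2$ nor generalized dicyclic, hence realizable in the sense of Babai~\cite{Ba77}.

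The one substantive ingredient --- which I would develop in Section~\ref{sec:4.3} rather than here --- is pinning down the combinatorial type and full isometry group of a generic free orbit polytope for each $\rho$ above; this is precisely what the phrase ``almost uniform'' signals in (f)--(h), where one does not claim the uniform Archimedean similarity type but only that ``combinatorially an icosahedron / snub cube / snub dodecahedron'' and ``isometry group equal to $G$'' are open conditions on $u$, so that a dense set of free points $u$ realizes them --- which suffices, since $N_{P(r,d)}$ is an infimum over similarity classes. For (f) the cleanest route is to note that the regular icosahedron is itself a free $A_4$--orbit polytope: the chiral tetrahedral group sits inside the icosahedral symmetry group $A_5\times\mathbb{Z}_2$ and, having no element of order $5$, acts freely and transitively on the $12$ vertices, so nearby orbit polytopes stay combinatorially icosahedral while generically losing every symmetry outside $A_4$. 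For (g) and (h) one invokes the classical snub (Wythoff) construction applied to the chiral octahedral and icosahedral groups, and for (a)--(e) the descriptions of prisms, antiprisms, and Coxeter permutahedra as orbit polytopes are standard. Granting these identifications, applying Theorem~\ref{thm:orbit} in cases (a), (b), (c) and Theorem~\ref{thm:closed} in cases (d)--(h) yields exactly the asserted upper bounds.
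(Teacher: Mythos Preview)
Your overall strategy---realize each polytope as a free orbit polytope for an explicit faithful, full--dimensional $\rho\colon G\to O(3)$ with $|G|=r$ and read off the bound $4r-6$ from Theorem~\ref{thm:orbit}---matches the paper's, and your choices of $G$ and $\rho$ in every case agree with those in Section~\ref{sec:4.3}.

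There is, however, a concrete error in your treatment of case (c). Your claim that ``the $A_3$--permutahedron is centrally symmetric'' is false for a \emph{generic} orbit point. With $u=(u_1,u_2,u_3,u_4)$ and $\sum u_i=0$, central symmetry of $\Conv(S_4\cdot u)$ requires $-u\in S_4\cdot u$, i.e.\ $\{-u_1,-u_2,-u_3,-u_4\}=\{u_1,u_2,u_3,u_4\}$ as multisets. This is a proper closed condition (satisfied by the classical choice $u=(1,2,3,4)$ but not generically), and since $-I\notin W(A_3)$ the generic $A_3$--permutahedron has isometry group exactly $S_4$, as the theorem asserts. Thus (c) should be routed through Theorem~\ref{thm:closed} (in the paper, via Corollary~\ref{cor:Coxeter}) just like (d) and (e); your use of Theorem~\ref{thm:orbit} alone still yields the bound $90$ but contradicts the stated $\Iso(tO)\cong S_4$. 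The mechanism in the paper is that Lemma~\ref{lem:iso} forces the $u$ produced by the partition to satisfy $\OSym(G,u)=\OSym(G)$, and closedness of the $W(A_3)$--inclusion gives $\OSym(G)=G$.

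A smaller methodological difference in (f)--(h): you invoke Theorem~\ref{thm:closed} via Babai's criterion, which presupposes that the specific subgroups $A_4,S_4,A_5<SO(3)$ already occur as full isometry groups of vertex--transitive $3$--polytopes. The paper instead shows (Proposition~\ref{prop:alternahedra}) that the restriction of each irreducible $3$--dimensional reflection representation to its rotation subgroup is absolutely irreducible and applies Corollary~\ref{cor:absolute}. Both routes are valid; the paper's avoids the mild circularity.
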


We note that $APr(2)$ is a tetrahedron whose faces are congruent isosceles triangles. For (d)--(e) and (g)--(h), the isometry groups match those given by their \textit{uniform} versions~\cite[Chapter 11.5] {J18}. As discussed in Section~\ref{sec:4.3}, we expect the estimates of (a)--(b) and those of (c)--(h) to exceed the true values by 2 and 3, respectively.

\section{Orbit Partitions}
\label{sec:2}

Theorem~\ref{thm:orbit} is a special case of Theorem~\ref{thm:affine} below, which guarantees ``orbit partitions" for all representations that do not contain a trivial subrepresentation. The existence of such partitions and their optimality is given as Theorem~\ref{thm:affine} below, which by Lemmas \ref{lem:generic} and ~\ref{lem:iso} of the next section are shown to satisfy certain generiticity conditions. Theorem~\ref{thm:orbit} follows by considering faithful and full-dimensional representations. An example of Theorem~\ref{thm:affine} for non-faithful representations is given by Theorem~\ref{thm:Tobias}. 

As is often in the case in Tverberg-type theory, it will be convenient to recast the desired partition-type for sets of $N$ points in $\mathbb{R}^d$ as an equivalent statement in terms of affine maps $f\colon\Delta_{N-1}\rightarrow \mathbb{R}^d$ from an $(N-1)$--simplex to $\mathbb{R}^d$, that is, those which send any convex sum of vertices to the convex sums of their images. With this viewpoint, any partition of a set of $N$ points in $\mathbb{R}^d$ into $r$ subsets corresponds to $r$ pairwise faces of the simplex, while the convex hulls arising from the partition corresponds to the images of the $r$ faces under the mapping. For instance, a non-empty $r$-fold intersection of convex hulls as in Tverberg's theorem corresponds to a non-empty $r$-fold intersection of the images of $r$ pairwise disjoint faces. As an affine map is completely determined by the images of the vertices of the simplex, we shall say that a property holds for ``generic'' affine maps $f\colon \Delta_{N-1}\rightarrow \mathbb{R}^d$ provided the property holds for generic sets of $N$ points in $\mathbb{R}^d$. 

Recall that an $r$--tuple $(x_1,\ldots, x_r)$ of points from $\Delta_{N-1}$ is said to have pairwise disjoint support if there exist $r$ vertex-disjoint faces $(\sigma_1,\ldots, \sigma_r)$ of $\Delta_{N-1}$ with $x_j\in \sigma_j$ for all $1\leq j \leq r$. If $G$ is a group of order $r$, we parametrize each $r$--tuple $(x_g)_{g\in G}\subset \Delta_{N-1}$ of points with pairwise disjoint support by the group $G$. 

\begin{theorem}
\label{thm:affine}
Let $G$ be a finite group of order $r\geq 3$, let $\rho\colon G\rightarrow O(d)$ be an orthogonal representation which does not contain a trivial subrepresentation, and let $N=t(r,d)-d=(r-2)(d+1)+2$. Then for any affine map $f\colon\Delta_{N-1}\rightarrow \mathbb{R}^d$, there exists a collection $(x_g)_{g\in G}\subset \Delta_{N-1}$ of $r$ points with pairwise disjoint support and points $a$ and $u$ in $\mathbb{R}^d$ such that  \begin{equation}\label{eqn:orbit} f(x_g)=a+g\cdot u \end{equation} for all $g\in G$. Moreover, such a partition fails for generic affine maps if $N<(r-2)(d+1)+2$. 
\end{theorem}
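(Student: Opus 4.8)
The plan is to follow the configuration-space/test-map paradigm that underlies Tverberg-type results, but adapted to the equivariant setting dictated by the group $G$. First I would set up the relevant configuration space: the deleted join $(\Delta_{N-1})^{*r}_\Delta$, or more precisely the subspace of $r$-tuples $(x_g)_{g\in G}$ with pairwise disjoint support, parametrized by $G$. The target of the test map should detect the failure of the desired conclusion \eqref{eqn:orbit}. Given $f$, define a map sending $(x_g)_{g\in G}$ to the tuple $(f(x_g))_{g\in G}\in(\mathbb{R}^d)^G\cong\mathbb{R}[G]\otimes\mathbb{R}^d$, and then compose with the projection onto the complement of the ``orbit-type'' subspace — namely the subspace of tuples of the form $(a+g\cdot u)_{g\in G}$, which is exactly $\mathbf{1}\otimes\mathbb{R}^d \oplus (\text{the copy of the representation }\rho\text{ sitting diagonally})$. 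A zero of the resulting test map is precisely a solution to \eqref{eqn:orbit}. The group $G$ acts on the configuration space by permuting the $G$-labels via left translation, and acts on the target so that the test map is $G$-equivariant; the key point is that the orbit-type subspace is a $G$-invariant summand, so the quotient representation $W$ (the ``useful'' part of the target) carries a genuine $G$-action with no trivial summand on the $\rho$-part, by hypothesis.

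The second step is the dimension/degree count. The subspace of pairwise-disjoint-support tuples in $(\Delta_{N-1})^{\times r}$, after deleting the diagonal-support locus, is homotopy equivalent to a wedge/sphere-like complex of dimension roughly $N-1-(r-1)=(r-2)(d+1)+2-1-(r-1)=(r-1)d-r+2\dots$ — I would compute this carefully: the chessboard/deleted-join complex $[r]^{*N}_\Delta$ has the homotopy type appropriate to a free $G$-complex, and the relevant connectivity is $N-2$. The target $W$ has real dimension $r d - (d + d) = (r-2)d$ coming from the $(\mathbb{R}^d)^G$ minus the $2d$-dimensional orbit-type space — wait, one must be careful because the orbit-type space is $d$ (for $a$) plus $\dim\rho\le d$ (for $u$), and only equals $2d$ when $\rho$ is full-dimensional; in general it is $d+\dim_{\mathbb{R}}\rho$. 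Matching: $N-1 = $ dimension of configuration space factor, and the obstruction lives in cohomology with coefficients twisted by $W$. The choice $N=(r-2)(d+1)+2$ is calibrated so that the Borsuk–Ulam/Dold-type nonexistence-of-equivariant-map argument forces a zero; I would invoke an equivariant cohomological lower bound (Volovikov's theorem or the ideal-valued index, or a Fadell–Husseini index computation) for the relevant $G$-space, using that $G$ acts freely on the configuration space.

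For the optimality clause (a partition fails for generic affine maps when $N<(r-2)(d+1)+2$), I would argue by a dimension count in the space of affine maps: the conditions \eqref{eqn:orbit} impose, for a fixed combinatorial type of pairwise-disjoint faces, a system of equations whose solution set generically has codimension exceeding the available parameters when $N$ is one smaller. Concretely, choosing $x_g$ in the relative interiors of prescribed faces and solving for $a,u$ is a linear-algebraic condition on $f$; counting the number of free parameters ($f$ restricted to each face, the barycentric coordinates of each $x_g$, and $a,u$) against the number of equations $r\cdot d$, one finds a deficit of exactly one at $N=(r-2)(d+1)+1$, and a standard transversality/Sard argument then shows the bad locus is measure zero, i.e.\ generic maps admit no such partition. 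This is essentially the same bookkeeping that makes the Tverberg number tight, shifted by the $d$ degrees of freedom in $u$. The main obstacle I anticipate is the first step done correctly: identifying the precise $G$-representation $W$ on the target and verifying that the equivariant topological obstruction is nonzero for \emph{arbitrary} finite $G$ (not just elementary abelian or prime-order groups, where Borsuk–Ulam arguments are cleanest) — this likely requires the ideal-valued (Fadell–Husseini) index or Volovikov-type machinery rather than a naive connectivity-versus-freeness count, and getting the coefficient system and the index of the sphere $S(W)$ to line up is where the real work lies.
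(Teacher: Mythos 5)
Your setup (deleted join configuration space, test map given by projecting $(f(x_g))_{g\in G}\in\mathbb{R}^d[G]$ away from the ``orbit-type'' subspace $\{(a+g\cdot u)_g\}$, and the degrees-of-freedom count for optimality) matches the paper, and your optimality argument is essentially the one given there. But the step you defer --- proving that the equivariant test map must vanish for an \emph{arbitrary} finite group $G$ --- is the heart of the theorem, and the machinery you propose for it does not close the gap. Volovikov's theorem applies to $p$-tori; Dold-type connectivity-versus-dimension arguments require the $G$-action on the target sphere to be free, which $S(W\oplus\mathbb{R}^\perp[G])$ need not be for general $G$ (the representation merely has no trivial summand); and a Fadell--Husseini index computation for an arbitrary finite group acting non-freely on a representation sphere is exactly the kind of obstruction that fails in general --- the topological Tverberg theorem itself is false for non-prime-power $r$, so one should not expect a purely homotopy-theoretic argument to succeed here.

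The missing idea is that the test map, viewed on the join $G^{\ast N}\cong(\Delta_{N-1})^{\ast G}_\Delta$, is \emph{affine} in the join coordinates (because $f$ is affine and each component $\lambda_g f(x_g)-c_0(\lambda x)-g\cdot c_\rho(\lambda x)$ is linear in the weighted points $\lambda_g x_g$). One then invokes Sarkaria's ``Linear Borsuk--Ulam'' theorem (equivalently, B\'ar\'any's colorful Carath\'eodory theorem): any affine $G$-equivariant map from $G^{\ast N}$ to an $(N-1)$-dimensional representation with no trivial subrepresentation has a zero, with no freeness hypothesis on the target and for every finite $G$. This convex-geometric input is what makes the result work for arbitrary groups. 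Two smaller points: you must also carry a second component $R(\lambda x)=\sum_g(\lambda_g-\tfrac1{|G|})g$ into $\mathbb{R}^\perp[G]$ to force all join coefficients to equal $\tfrac1{|G|}$ (precluding empty faces and bringing the target dimension up to exactly $N-1=d(r-2)+(r-1)$); and your worry about the orbit-type subspace having dimension $d+\dim\rho<2d$ is unfounded --- the diagonal copy $\{\sum_g(g\cdot v)\mathbf{e}_g\}$ always has dimension $d$, and it meets the constant subspace trivially precisely because $\rho$ contains no trivial subrepresentation, so the useful quotient $W$ has dimension $d(r-2)$ in all cases covered by the theorem.
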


Our proof of Theorem~\ref{thm:affine} is given over the course of Sections 2.1---2.4. The proof for the existence of such a partition  follows the usual Configuration--Space/Test--Map Paradigm of topological combinatorics. The central idea is to construct a naturally associated $G$--equivariant map $L\colon G^{\ast N}\rightarrow U$ whose zeros corresponding precisely to collections $(x_g)_{g\in G}\subset \Delta_{N-1}$ of $r$ points with pairwise disjoint support that satisfy equation~\ref{eqn:orbit}. The domain of this mapping, discussed in Section 2.1, is the deleted $r$--fold join $(\Delta_{N-1})^{\ast r}_\Delta$ commonly used in Tverberg-type theory, which as in~\cite{S2000} can be identified with the $N$-fold join $G^{\ast N}$ of the group itself.  The codomain $U=W\oplus \mathbb{R}^\perp[G]$ is the direct sum of a certain sub-representation $W$ of the $d$-fold product of the regular representation $\mathbb{R}[G]$ discussed in Section 2.2 with $\mathbb{R}^\perp[G]$, the orthogonal complement of the trivial representation inside $\mathbb{R}[G]$. The group $G$ acts on both the domain and codomain, and the map $L$ between these two spaces described in Section 2.3 respects these two actions. The action of $G$ on the domain is free, the representation space $U$ does not contain a trivial subrepresentation, and the map $L$ is affine, so $L$ is guaranteed to have a zero by Sarkaria's ``Linear Borsuk--Ulam" Theorem~\cite[Theorem 2.4]{S2000}, or equivalently from B\'ar\'any's colorful Carath\'edory Theorem ~\cite{B82}. This establishes the existence of an orbit partition, while the optimality of Theorem~\ref{thm:affine} is proved in Section 2.4 via a standard degrees of freedom count. 

\subsection{Configuration space} Let $N\geq 1$. The $r$-fold join $\Delta_{N-1}^{\ast r}$ of the simplex $\Delta_{N-1}$ consists of the join $\sigma_1\ast \cdots \ast \sigma_r$ of any $r$ faces $\sigma_1,\ldots, \sigma_r$ of $\Delta_{N-1}$ (including the possibility of empty faces), or in other words all formal sums of the form $\lambda_1x_1+\cdots + \lambda_rx_r$ where $x_j\in \sigma_j$ for all $1\leq j \leq r$, $\sum_{j=1}^r \lambda_j=1$, and $\lambda_j\geq 0$ for all $1\leq j \leq r$. The deleted $r$-fold join $$(\Delta_{N-1})^{\ast r}_\Delta = \{\lambda_1x_1+\cdots + \lambda_rx_r\in \sigma_1\ast \cdots \ast \sigma_r \mid x_i\in \sigma_i\,\,\,\, \text{and}\,\,\, \sigma_i\cap \sigma_j=\emptyset\,\, \text{for all} \,\, i\neq j\}$$ is the subcomplex of $\Delta_{N-1}^{\ast r}$ consisting of all points from the joins of pairwise disjoint faces. Given a group $G$ of order $r$, we follow Sarkaria~\cite{S2000} and parametrize each $r$-tuple of disjoint faces by $G$ and denote the resulting complex by $(\Delta_{N-1})^{\ast G}_\Delta$. For convenience, we shall make a slight abuse of notation and denote each formal sum $\sum_{g\in G}\lambda_gx_g\in (\Delta_{N-1})^{\ast G}_\Delta$ by $\lambda x$.

We shall consider the \textit{right} group action on $(\Delta_{N-1})^{\ast G}_\Delta$ induced by right multiplication in $G$. Thus $\lambda x\cdot h=\sum_g\lambda_{gh}x_{gh}$ for each $\lambda x=\sum_g \lambda_g x_g\in (\Delta_{N-1})^{\ast G}_\Delta$. This action is free because the faces are pairwise disjoint. Parametrizing $r$ disjoint copies of the vertex set $\{v_j\}_{j=1}^N$ of $\Delta_{N-1}$ by $G$, this simplicial complex can be  identified with the $N$-fold join $$G^{\ast N}=\left\{\sum_{j=1}^Nt_j v_j^{g_j} \mid g_j \in G,\,\, \sum_{j=1}^N t_j=1,\,\, \text{and}\,\, t_j\geq0\,\,\text{for all} \,\, 1\leq j\leq N \right\}.$$ For this complex, we consider the \textit{left} $G$-action given by $h\cdot \sum_{j=1}^N t_j v_j^{g_j} =\sum_{j=1}^N t_j v_j^{g_jh^{-1}}$ arising from right multiplication in $G$ by inverses. Again, this action is free. An equivariant isomorphism $\iota\colon G^{\ast\,N}\cong(\Delta_{N-1})^{\ast G}_\Delta$ between the complexes is given by grouping, which, as in~\cite{LS21}, has the following explicit description: For each $v=\sum_{j=1}^Nt_j v_j^{g_j}\in G^{\ast N}$ and each $g\in G$, let $$J_g(v)=\{j\mid g_j=g \,\, \text{and}\,\, t_j> 0\}.$$ One then defines $\iota(v)=\sum_{g\in G} \lambda_g x_g$, where $$\lambda_g=\sum_{j\in J_g(v)} t_j\,\,\, \text{and}\,\,\, x_g=\sum_{j\in J_g(v)}\frac{t_j}{\lambda_g} v_j$$ if $J_g\neq \emptyset$, while one sets $\lambda_g(v)=0$ otherwise. In particular, observe that $\iota(v_j^{g_j})=v_j$,  where $v_j$ occurs in the $g_j$-th coordinate of the deleted join. As $g=g_j h^{-1}$ if and only if $g_j=gh$, it is easily verified that $\iota$ preserves the respective $G$-actions.

\subsection{Test space} 
For a finite group $G$, let $\mathbb{R}[G]=\{\sum r_g g \mid r_g\in \mathbb{R}\}$ denote the \textit{right} regular representation, that is, with \textit{left} $G$-action arising from multiplication on the right by inverses. We let $\mathbb{R}^d[G]$ denote the $d$-fold sum of $\mathbb{R}[G]$ equipped with the diagonal action. For convenience, we denote each $(\sum_{g\in G} r_g^1 g,\ldots, \sum_{g\in G} r_g^d g)$ in $\mathbb{R}^d[G]$ by $\sum_{g\in G} (r_1^g,\ldots, r_d^g)\mathbf{e}_g$, so that  $$\mathbb{R}^d[G]=\{\sum_g v_g\mathbf{e}_g\mid v_g\in \mathbb{R}^d\}.$$ As before, $h\cdot \mathbf{e}_g=\mathbf{e}_{gh^{-1}}$ for each $h,g\in G$, so that each $h\in G$ acts as a right permutation of elements of $\mathbb{R}^d[G]$, that is, $h\cdot \sum_g v_g\mathbf{e}_g=\sum_g v_{gh}\mathbf{e}_g$.

Now let $\rho\colon G\rightarrow O(d)$ be any orthogonal representation. For notational purposes, we denote the representation space by $V$ and let $\cdot$ denote the resulting left action. In what follows, we will need to establish the dimension of a certain subrepresentation $W$ of $\mathbb{R}[G]$. To that end, define $V_\rho=\{\sum_g (g\cdot v) \mathbf{e}_g\mid v\in \mathbb{R}^d \}$ and let $$W_\rho=\left\{\sum_g w_g\mathbf{e}_g\mid \sum_g g^{-1}\cdot w_g = 0\right\}.$$ 

\begin{proposition}
\label{prop:1}
$V_\rho$ and $W_\rho$ are complementary subrepresentations of $\mathbb{R}^d[G]$. Moreover, $V_\rho\cong V$ and so $\dim W_\rho = d(|G|-1)$. 
\end{proposition}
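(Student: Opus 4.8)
The plan is to exhibit an explicit $G$-equivariant linear map realizing $V_\rho$ as a copy of $V$, verify it is an isomorphism, and then show directly that $W_\rho$ is the kernel of a natural equivariant projection onto $V_\rho$, so that the two spaces are complementary. First I would define $\Phi\colon V\to \mathbb{R}^d[G]$ by $\Phi(v)=\sum_g (g\cdot v)\mathbf{e}_g$, whose image is by definition $V_\rho$. Equivariance is the first check: for $h\in G$ one has $h\cdot \Phi(v)=\sum_g (g\cdot v)\mathbf{e}_{gh^{-1}}=\sum_g ((gh)\cdot v)\mathbf{e}_g=\sum_g (g\cdot(h\cdot v))\mathbf{e}_g=\Phi(h\cdot v)$, using the reindexing convention $h\cdot\sum_g v_g\mathbf{e}_g=\sum_g v_{gh}\mathbf{e}_g$ recorded in the text. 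Injectivity of $\Phi$ is immediate, since the $\mathbf{e}$-component at $g=e$ recovers $v$; hence $\Phi$ is a $G$-isomorphism onto $V_\rho$, giving $V_\rho\cong V$ and $\dim V_\rho=d$.

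Next I would identify the natural retraction. Define $\pi\colon \mathbb{R}^d[G]\to \mathbb{R}^d$ by $\pi\bigl(\sum_g w_g\mathbf{e}_g\bigr)=\tfrac{1}{|G|}\sum_g g^{-1}\cdot w_g$, and set $P=\Phi\circ\pi\colon \mathbb{R}^d[G]\to \mathbb{R}^d[G]$. A short computation shows $\pi\circ\Phi=\mathrm{id}_V$: indeed $\pi(\Phi(v))=\tfrac{1}{|G|}\sum_g g^{-1}\cdot(g\cdot v)=v$. Consequently $P$ is idempotent ($P^2=\Phi\pi\Phi\pi=\Phi\pi=P$), its image is exactly $V_\rho$, and its kernel is $\ker\pi$. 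But $\ker\pi$ is precisely the set of $\sum_g w_g\mathbf{e}_g$ with $\sum_g g^{-1}\cdot w_g=0$, which is the definition of $W_\rho$. Since any vector $\xi$ decomposes as $\xi=P\xi+(\xi-P\xi)$ with $P\xi\in V_\rho$ and $\xi-P\xi\in\ker P=W_\rho$, and $V_\rho\cap W_\rho=\{0\}$ because $P$ restricts to the identity on $V_\rho$, we get $\mathbb{R}^d[G]=V_\rho\oplus W_\rho$ as vector spaces. It remains to note that $W_\rho$ is $G$-invariant: one should check that $\pi$ is equivariant with respect to the trivial-target action, i.e. $\pi(h\cdot\xi)=h\cdot\pi(\xi)$, which follows from the same reindexing computation as for $\Phi$; then $\ker\pi=W_\rho$ is a subrepresentation, and the decomposition is one of representations. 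Finally, $\dim W_\rho=\dim\mathbb{R}^d[G]-\dim V_\rho=d|G|-d=d(|G|-1)$.

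The only genuinely delicate point is keeping the left/right bookkeeping straight: the regular representation here carries the \emph{left} action $h\cdot\mathbf{e}_g=\mathbf{e}_{gh^{-1}}$, so the index shift in both the equivariance of $\Phi$ and the equivariance of $\pi$ must be done with $gh$ versus $gh^{-1}$ exactly as in the text's running convention, and the factor $g^{-1}$ (rather than $g$) inside the definition of $W_\rho$ and $\pi$ is what makes $\pi\circ\Phi=\mathrm{id}$ work out. Everything else is a routine linear-algebra verification of idempotency and the dimension count. I would present the argument in the order above: define $\Phi$ and check it is an equivariant isomorphism onto $V_\rho$; define $\pi$, check $\pi\circ\Phi=\mathrm{id}$ and equivariance; conclude $P=\Phi\pi$ is an equivariant idempotent with image $V_\rho$ and kernel $W_\rho$; and read off the direct-sum decomposition and the dimension of $W_\rho$.
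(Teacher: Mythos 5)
Your proof is correct and follows essentially the same route as the paper's: the idempotent $P=\Phi\circ\pi$ you construct is exactly the paper's decomposition of an arbitrary $\xi=\sum_g v_g\mathbf{e}_g$ as $b+(\xi-b)$ with $b=\sum_g (g\cdot a)\mathbf{e}_g$ and $a=\tfrac{1}{|G|}\sum_g g^{-1}\cdot v_g$, just packaged as a projection operator. One small terminological slip: $\pi$ intertwines the $G$-action on $\mathbb{R}^d[G]$ with the action of $\rho$ on the target $\mathbb{R}^d$ (not the trivial action), but the identity $\pi(h\cdot\xi)=h\cdot\pi(\xi)$ you verify is the correct one and suffices for $\ker\pi=W_\rho$ to be a subrepresentation.
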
 

\begin{proof} 
Clearly, both $V_\rho$ and $W_\rho$ are linear subspaces of $\mathbb{R}^d[G]$. To see that they are subrepresentations, first consider $V_\rho$. For any $h\in G$ have $h\cdot \sum_g (g\cdot v) \mathbf{e}_g=\sum_g (g \cdot v) \mathbf{e}_{gh^{-1}}=\sum_g (gh\cdot v)\mathbf{e}_g=\sum_g g\cdot (h\cdot v)\mathbf{e}_g$. Thus $V_\rho$ is a subrepresentation of $\mathbb{R}^d[G]$ and in fact is isomorphic to $V$. As for $W_\rho$, we have $h\cdot \sum_g w_g\mathbf{e}_g= \sum_g w_{gh}\mathbf{e}_g$. Then $\sum_g g^{-1}\cdot w_{gh}= h\cdot \sum_g (gh)^{-1}\cdot w_{gh}=h\cdot \sum_g g^{-1}\cdot w_g =0$, so again $W_\rho$ is a subrepresentation of $\mathbb{R}^d[G]$.

	Now we show that $V_\rho$ and $W_\rho$ are complementary. First, note that $\sum_g (g \cdot v) \mathbf{e}_g$ lies in $W_\rho$ if and only if  $|G|v=0$, so $V_\rho\cap W_\rho=\{0\}$. On the other hand, suppose that $\sum_g v_g \mathbf{e}_g\in \mathbb{R}^d[G]$. Letting  $a=\frac{1}{|G|}\sum_g g^{-1}\cdot v_g$, we have that $b:=\sum_g (g\cdot a)\mathbf{e}_g$ lies in $V_\rho$. On the other hand, $\sum_g v_g \mathbf{e}_g-b$ lies in $W_\rho$ because $\sum_g g^{-1}\cdot(v_g -g\cdot a)=\sum_g (g^{-1}\cdot v_g -a)=0.$
\end{proof} 

Let $V_0=\{\sum_{g\in G} v \mathbf{e}_g\mid v\in \mathbb{R}^d\}$ denote the trivial representation inside $\mathbb{R}^d[G]$, and let  $$W_0=\left\{\sum_{g\in G} w_g\mathbf{e}_g\mid \sum_g w_g=0\right\}$$ denote its complementary subrepresentation. 
\begin{proposition}
\label{prop:W} 
Suppose that $V$ does not contain a trivial subrepresentation. Then\\
(i) $V_0\cap V_\rho=\{0\}$, and\\
(ii) $V':=V_0+V_\rho$ and $W:=W_0\cap W_\rho$ are complementary subrepresentations of $\mathbb{R}^d[G]$. In particular, $\dim W =d(|G|-2)$.
\end{proposition}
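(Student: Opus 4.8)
The plan is to deduce the proposition from two direct-sum decompositions already at hand --- the standard splitting $\mathbb{R}^d[G] = V_0 \oplus W_0$ introduced above, and $\mathbb{R}^d[G] = V_\rho \oplus W_\rho$ from Proposition~\ref{prop:1} --- together with a single consequence of the hypothesis on $V$, namely that the averaging operator on $V$ vanishes.

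First I would unwind the hypothesis. Since $V$ contains no trivial subrepresentation, the fixed subspace $\Fix(\rho) = \{v \in \mathbb{R}^d : g\cdot v = v \text{ for all } g \in G\}$ is $\{0\}$; the map $v \mapsto \sum_{g\in G} g\cdot v$ is $G$-equivariant with image contained in $\Fix(\rho)$, hence is identically zero, so $\sum_{g\in G} g\cdot v = 0$ --- and equivalently $\sum_{g\in G} g^{-1}\cdot v = 0$ --- for every $v \in \mathbb{R}^d$. Part (i) is then immediate: an element of $V_0 \cap V_\rho$ has the form $\sum_g v\mathbf{e}_g = \sum_g (g\cdot w)\mathbf{e}_g$, so comparing the coordinate at $g = e$ gives $v = w$ and hence $v = g\cdot v$ for all $g$, forcing $v \in \Fix(\rho) = \{0\}$.

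Next I would use the vanishing of the averaging operator to record the two inclusions $V_0 \subseteq W_\rho$ and $V_\rho \subseteq W_0$: for $\sum_g v\mathbf{e}_g \in V_0$ the condition $\sum_g g^{-1}\cdot v = 0$ defining $W_\rho$ holds, and for $\sum_g (g\cdot v)\mathbf{e}_g \in V_\rho$ the condition $\sum_g g\cdot v = 0$ defining $W_0$ holds. The rest is bookkeeping with the two splittings. Intersecting $\mathbb{R}^d[G] = V_\rho \oplus W_\rho$ with $W_0$, which contains $V_\rho$, the modular law (using $V_\rho \cap W_\rho = \{0\}$) gives $W_0 = V_\rho \oplus (W_0 \cap W_\rho) = V_\rho \oplus W$; since $\dim V_\rho = d$ and $\dim W_0 = d(|G|-1)$, this yields $\dim W = d(|G|-2)$. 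Then $V' + W = V_0 + (V_\rho + W) = V_0 + W_0 = \mathbb{R}^d[G]$, while if $x \in V' \cap W$ I would write $x = a + b$ with $a \in V_0 \subseteq W_\rho$ and $b \in V_\rho$; since $x \in W \subseteq W_\rho$ this forces $b \in V_\rho \cap W_\rho = \{0\}$, so $x = a \in V_0 \cap W_0 = \{0\}$. Hence $V'$ and $W$ are complementary, and they are subrepresentations because $V_0, W_0, V_\rho, W_\rho$ are and subrepresentations are closed under sums and intersections.

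I do not anticipate a real obstacle: the content is concentrated in identifying the hypothesis with the vanishing of the averaging operator. The only step that warrants care is the modular-law computation $W_0 = V_\rho \oplus W$; if one wishes to bypass it, one can instead check directly that every $y \in W_0$, decomposed as $y = y_1 + y_2 \in V_\rho \oplus W_\rho$ via Proposition~\ref{prop:1}, has $y_2 = y - y_1 \in W_0$ (since $y_1 \in V_\rho \subseteq W_0$), so $y_2 \in W_0 \cap W_\rho = W$ and $W_0 = V_\rho + W$, which is the only place the dimension count is actually needed.
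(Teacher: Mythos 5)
Your proof is correct, and part (i) is the same one-line argument as the paper's. For part (ii) you take a genuinely different (and arguably cleaner) route. The paper shows $V'\cap W=\{0\}$ by an explicit element computation (writing a general element of $V'$ as $\sum_g(v_0+g\cdot v_1)\mathbf{e}_g$ and testing it against the defining conditions of $W_0$ and $W_\rho$), and then closes the argument with the dimension inequality $\dim W\ge\dim W_0+\dim W_\rho-\dim\mathbb{R}^d[G]=d(|G|-2)$ together with $\dim V'=2d$. You instead isolate the two containments $V_0\subseteq W_\rho$ and $V_\rho\subseteq W_0$ (both consequences of the vanishing of the averaging operator, which is also the engine of the paper's computation) and then do pure lattice bookkeeping: the modular law gives the exact splitting $W_0=V_\rho\oplus W$, from which the dimension of $W$, the identity $V'+W=\mathbb{R}^d[G]$, and the triviality of $V'\cap W$ all fall out. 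What your version buys is that the dimension of $W$ is obtained as an exact count rather than via an inequality that is then squeezed, and the complementarity needs no separate element-level verification; what the paper's version buys is that it never needs the containments $V_0\subseteq W_\rho$, $V_\rho\subseteq W_0$ to be stated, only the single fact that $G$-invariant vectors vanish. Both are complete and correct.
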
 

\begin{proof}
To see (i), suppose that there exist $v_0$ and $v_1$ in $\mathbb{R}^d$ such that $g\cdot v_0=v_1$ for all $g\in G$. Certainly $v_1=v_0$, and so $v_0$ is  $G$-invariant. Thus $v_0=0$, since $V$ does not contain a trivial subrepresentation. 

For (ii), we first show that the intersection of $V'$ and $W$ is trivial. Suppose that $v_0,v_1\in V$ and let $w:=\sum_g (v_0+g \cdot v_1)\mathbf{e}_g\in V'$. We have $\sum_g g^{-1}\cdot (v_0+g\cdot v_1)=\sum_g g^{-1}\cdot v_0 + |G|v_1=|G|v_1$ because $\sum_g g^{-1} \cdot v_0$ is $G$-invariant. Thus $v_1=0$ if $w$ lies in $W_\rho$. Similar reasoning shows that $\sum_g (v_0+g\cdot v_1)=|G|v_0$, so that $v_0=0$ if $w\in W_0$. To conclude that $\mathbb{R}^d[G]=V'\oplus W$, we show that $\dim(V')+\dim W\geq d|G|$. But $\dim V'=2d$, while $\dim W= \dim W_0+\dim W_\rho -\dim(W_0+W_\rho)\geq d(|G|-1)+d(|G|-1)-d|G|=d(|G|-2)$ by Proposition~\ref{prop:1}.
\end{proof}

\subsection{Test map} 

As motivation for our test map, we first make a simple observation. Let $\rho\colon G\rightarrow O(d)$ be an orthogonal representation. Fixing an ordering of $G$, let $(\Delta_{N-1})^{\times G}_\Delta$  denote the $|G|$-fold \textit{deleted product} of $\Delta_{N-1}$ consisting of all $|G|$-tuples $x=(x_g)_{g\in G}$ of points from $\Delta_{N-1}$ with pairwise disjoint support. For any $x\in (\Delta_{N-1})^{\times G}_\Delta$, we let $$c_\rho(x)=\frac{1}{|G|}\sum_{g\in G}g^{-1}\cdot f(x_g).$$  In particular, $c_\rho$ is simply the average $$c_0(x)=\frac{1}{|G|}\sum_{g\in G}f(x_g)$$ when $\rho$ is the trivial representation. 
 
 \begin{proposition}
\label{prop:Fourier} Suppose that $\rho\colon G\rightarrow O(d)$ does not contain a trivial subrepresentation and let $f\colon\Delta_{N-1}\rightarrow \mathbb{R}^d$ be an affine map. If there exists some $(x_g)_{g\in G}\subset\Delta_{N-1}$ with pairwise disjoint support which satisfies equation~\ref{eqn:orbit}, then $a=c_0(x)$ and $u=c_\rho(x)$. 
\end{proposition}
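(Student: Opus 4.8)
The plan is to substitute the hypothesized relation~\eqref{eqn:orbit} directly into the definitions of $c_0(x)$ and $c_\rho(x)$ and to exploit a single elementary fact: since $V$ contains no trivial subrepresentation, the only $G$-invariant vector in $\mathbb{R}^d$ is $0$ (this is exactly the observation used in the proof of Proposition~\ref{prop:W}(i)). Equivalently, for every $w\in\mathbb{R}^d$ the average $\tfrac{1}{|G|}\sum_{g\in G} g\cdot w$ is $G$-invariant and therefore vanishes.

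Concretely, suppose $f(x_g)=a+g\cdot u$ for all $g\in G$. Averaging gives
\[
c_0(x)=\frac{1}{|G|}\sum_{g\in G} f(x_g)=a+\frac{1}{|G|}\sum_{g\in G} g\cdot u,
\]
and the second term is $G$-invariant (left translation of the index permutes the summands), hence $0$; thus $a=c_0(x)$. For the other identity, using that $\rho$ is a group action we have $g^{-1}\cdot f(x_g)=g^{-1}\cdot a+(g^{-1}g)\cdot u=g^{-1}\cdot a+u$, so
\[
c_\rho(x)=\frac{1}{|G|}\sum_{g\in G} g^{-1}\cdot f(x_g)=\frac{1}{|G|}\sum_{g\in G} g^{-1}\cdot a+u.
\]
As $g$ ranges over $G$ so does $g^{-1}$, so $\tfrac{1}{|G|}\sum_{g\in G} g^{-1}\cdot a$ is again a $G$-invariant vector, hence $0$, and $u=c_\rho(x)$.

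There is essentially no obstacle here; the only point worth stating carefully is why the two averages vanish — namely that a nonzero $G$-invariant vector would span a trivial subrepresentation of $V$, contrary to hypothesis. It is worth noting what the proposition buys us: the parameters $a$ and $u$ in an orbit partition are \emph{uniquely determined} by $x$ via the explicit formulas $a=c_0(x)$, $u=c_\rho(x)$, which is precisely the input needed to define the test map of Section 2.3 unambiguously.
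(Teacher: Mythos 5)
Your proof is correct and follows essentially the same route as the paper's: substitute $f(x_g)=a+g\cdot u$ into the two averages and observe that the resulting sums $\sum_{g\in G} g\cdot u$ and $\sum_{g\in G} g^{-1}\cdot a$ are $G$-invariant, hence zero because $\rho$ contains no trivial subrepresentation. No gaps.
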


\begin{proof} Assuming equation~\ref{eqn:orbit} holds, we have $|G|c_0(x)=a|G|+\sum_{g\in G} g\cdot u$.  The sum $\sum_{g\in G} g\cdot u=0$ is $G$-invariant and $\rho$ does not contain a trivial subrepresentation, so $a=c_0(x)$. Likewise, we have $\sum_g g^{-1}\cdot f(x_g)=\sum_{g\in G}g^{-1}\cdot a+|G|u$. Again $\sum_{g\in G}g^{-1}\cdot a$ is $G$-invariant, so $u=c_\rho(x)$. \end{proof} 

Let $f\colon \Delta_{N-1}\rightarrow \mathbb{R}^d$ be an affine map and suppose that $\rho\colon G\rightarrow O(d)$ does not contain a trivial subrepresentation. For each $\lambda x=\sum_{g\in G} \lambda_g x_g \in (\Delta_{N-1})^{\ast G}_\Delta$, we define
\begin{equation}
\label{eqn:join} 
c_\rho(\lambda x)=\frac{1}{|G|}\sum_{g\in G} \lambda_g g^{-1}\cdot f(x_g)\end{equation} and 
$$c_0(\lambda x)=\frac{1}{|G|}\sum_{g\in G} \lambda_gf(x_g).$$

Observe that the deleted product $(\Delta_{N-1})^{\times G}_\Delta$ can be identified as the subcomplex of the deleted join $(\Delta_{N-1})^{\ast G}_\Delta$ consisting of all $\lambda x$ with $\lambda_g=\frac{1}{|G|}$ for all $g\in G$. In that case, we have $c_\rho(x)=|G|c_\rho(\lambda x)$ and $c_0(x)=|G|c_0(\lambda x)$  as well. 
	
	Define $F\colon (\Delta_{N-1})_{\Delta}^{\ast G}  \rightarrow \mathbb{R}^d[G]$  by $F(\lambda x)= \sum_g F_g(\lambda x) \mathbf{e}_g$, where  $$F_g(\lambda x)= \lambda_gf(x_g)-c_0(\lambda x) - g\cdot c_\rho(\lambda x).$$ It follows that $F(\lambda x\cdot h)=\sum_g F_{gh}(\lambda x) \mathbf{e}_g=\sum_g F_g(\lambda x)\mathbf{e}_{gh^{-1}}=h\cdot F(\lambda x)$ for all $h\in G$, and therefore $F$ is equivariant with respect to the given group actions.

It is straightforward to show that image of $F$ actually lies in the representation $W=W_0\cap W_\rho$ defined in Proposition~\ref{prop:W}. Namely, $\sum_{g\in G}g\cdot c_\rho(\lambda x)=0$ because the sum is $G$-invariant, and hence $\sum_g F_g(\lambda x)=\sum_g \lambda_g f_g(x) -\sum_g c_0(\lambda x)-\sum_g g \cdot c_\rho (\lambda x)=|G|c_0(\lambda x)-|G|c_0(\lambda x)-0=0.$ Thus $F(\lambda x)\in W_0$.  Likewise, $\sum_g g^{-1} \cdot F_g(\lambda x)=\sum_g \lambda_g g^{-1}\cdot f_g(x) -\sum_g g^{-1}\cdot c_0(\lambda x)-\sum_g c_\rho (\lambda x)=|G|c_\rho(\lambda x)-0-|G|c_\rho(\lambda x)=0$ because $\sum_g g^{-1}\cdot c_0(\lambda x)$ is $G$-invariant. Thus $F(\lambda x)\in W_\rho$. 
							
Finally, let $\mathbb{R}^\perp[G]=\{\sum_{g\in G} r_g g\mid \sum_{g\in G} r_g=0\}$ denote the orthogonal complement of the right regular representation $\mathbb{R}[G]$. To preclude empty faces, we define $R\colon (\Delta_{N-1})^{\ast G}_{\Delta}\rightarrow \mathbb{R}[G]^\perp$ by $$R(\lambda x) = \sum_{g\in G} \left(\lambda_g-\frac{1}{|G|}\right)\, g.$$ This map is also $G$-equivariant, so that $$F\oplus R\colon (\Delta_{N-1})^{\ast G}_\Delta \rightarrow W\oplus \mathbb{R}^\perp[G]$$ is as well. 

Crucially, any collection $(x_g)_{g\in G}\subset \Delta_{N-1}$ with pairwise disjoint support that satisfies equation~\ref{eqn:orbit} is equivalent to a zero of $F\oplus R$. Indeed, the vanishing of $F\oplus R$ means that $F_g(\lambda x)=0$ and $\lambda_g=\frac{1}{|G|}$ for all $g\in G$, or equivalently that there exists some $x=(x_g)_{g\in G}$ in the deleted product $(\Delta_{N-1})^{\times G}_\Delta$ such that $f(x_g)=c_0(x)+g\cdot c_\rho(x)$ for all $g \in G$. Conversely, any $x\in (\Delta_{N-1})^{\times G}_\Delta$ satisfying equation~\ref{eqn:orbit} corresponds to a zero of $F\oplus R$ by Proposition~\ref{prop:Fourier}. 

To show that $F\circ R$ must have a zero, let $\iota\colon G^{\ast N}\rightarrow (\Delta_{N-1}^{\ast G})_\Delta$ be the identification isomorphism above. A straightforward argument below will show that the composition \begin{equation}
\label{eqn:L} 
L:=(F\oplus R)\circ \iota\colon G^{\ast N}\rightarrow W\oplus \mathbb{R}^\perp[G] \end{equation} is affine, that is, that $L(\sum_{j=1}^Nt_jv_j^{g_j})=\sum_{j=1}^N t_jL(v_j^{g_j})$ for any  $v=\sum_{j=1}^Nt_jv_j^{g_j}$ in $G^{\ast N}$. Clearly, $L$ is $G$-equivariant with respect to the actions considered. The representation $W\oplus \mathbb{R}^\perp[G]$ does not contain a trivial subrepresentation, and has dimension $t(|G|,d)-d-1$ by Proposition~\ref{prop:W}. Letting $N=t(|G|,d)-d$, the map $L$ is therefore  guaranteed to have a zero by the following ``Linear Borsuk--Ulam'' Theorem ~\cite[Theorem 2.4]{S2000} of Sarkaria. This proves the existence of an orbit  partition as claimed Theorem~\ref{thm:affine}.

\begin{theorem}\label{thm:Sarkaria} Let $N\geq 1$ be an integer and let $U$ be a real $(N-1)$-dimensional linear representation of $G$ which does not contain a trivial subrepresentation. If $L\colon G^{\ast N} \rightarrow U$ is an affine $G$-equivariant map, then there exists some $v\in G^{\ast N}$ such that $L(v)=0$.\end{theorem}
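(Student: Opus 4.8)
\emph{Proof proposal.} The plan is to deduce Theorem~\ref{thm:Sarkaria} from B\'ar\'any's colorful Carath\'eodory theorem~\cite{B82}, the route hinted at in the discussion above. First I would unwind what an affine $G$-equivariant map $L\colon G^{\ast N}\to U$ is. Since $L$ is affine it is determined by the images $\ell_j^g:=L(v_j^g)\in U$ of the vertices, $1\le j\le N$ and $g\in G$, via $L(\sum_{j=1}^N t_j v_j^{g_j})=\sum_{j=1}^N t_j\,\ell_j^{g_j}$. Equivariance $L(h\cdot v_j^g)=h\cdot L(v_j^g)$ together with $h\cdot v_j^g=v_j^{gh^{-1}}$ forces $\ell_j^{gh^{-1}}=h\cdot \ell_j^g$; taking $h=g$ gives $\ell_j^g=g^{-1}\cdot w_j$ where $w_j:=\ell_j^e\in U$. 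Hence a zero of $L$ is precisely a choice of reals $t_j\ge 0$ with $\sum_j t_j=1$ and group elements $g_j\in G$ such that $\sum_{j=1}^N t_j\,(g_j^{-1}\cdot w_j)=0$.

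Next I would phrase this as a colorful Carath\'eodory problem. Set $m=\dim U=N-1$ and, for each $j$, let $C_j=\{g\cdot w_j\mid g\in G\}\subset U\cong\mathbb{R}^m$; since $g\mapsto g^{-1}$ is a bijection of $G$, $C_j$ also equals $\{g^{-1}\cdot w_j\mid g\in G\}$. A zero of $L$ is then exactly a rainbow selection $y_j\in C_j$ together with convex coefficients $t_j$ with $\sum_{j=1}^N t_j y_j=0$, i.e. a colorful point at the origin for the color classes $C_1,\dots,C_{m+1}$.

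The one place the hypotheses are used is in verifying the input required by colorful Carath\'eodory, namely that $0\in\Conv(C_j)$ for every $j$. For each $j$ the vector $\sum_{g\in G} g\cdot w_j$ is $G$-invariant, and since $U$ contains no trivial subrepresentation the only $G$-invariant vector of $U$ is $0$; thus $\frac{1}{|G|}\sum_{g\in G} g\cdot w_j=0$ exhibits the origin as the barycenter of $C_j$, so $0\in\Conv(C_j)$. Now B\'ar\'any's theorem, applied to the $m+1=N$ color classes $C_1,\dots,C_N$ in $\mathbb{R}^m$, produces points $y_j=g_j^{-1}\cdot w_j\in C_j$ and convex coefficients $t_j\ge 0$, $\sum_j t_j=1$, with $\sum_{j=1}^N t_j y_j=0$; then $v=\sum_{j=1}^N t_j v_j^{g_j}\in G^{\ast N}$ satisfies $L(v)=0$.

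I expect no serious obstacle: once the affine-plus-equivariant reduction is carried out the statement is a direct transcription of colorful Carath\'eodory, and the only delicate bookkeeping is keeping straight the left/right conventions and the inverses appearing in $h\cdot v_j^g=v_j^{gh^{-1}}$ and in $\ell_j^g=g^{-1}\cdot w_j$. (Alternatively one could give a self-contained topological proof via an equivariant degree/parity count for $L$ restricted to the join, but the colorful Carath\'eodory argument is shortest and uses exactly the hypothesis that $U$ has no trivial summand.)
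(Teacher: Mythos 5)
Your argument is correct: the reduction of an affine equivariant map to the data $\ell_j^g=g^{-1}\cdot w_j$, the observation that the no-trivial-subrepresentation hypothesis forces $0$ to be the barycenter of each orbit $C_j=G\cdot w_j$, and the application of B\'ar\'any's colorful Carath\'eodory theorem to the $N=\dim U+1$ color classes all check out. The paper does not prove this statement itself (it is imported from Sarkaria~\cite{S2000}), but it explicitly notes that the result follows ``equivalently from B\'ar\'any's colorful Carath\'eodory Theorem,'' which is precisely the derivation you have carried out.
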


To prove that the mapping $L$ of equation~\ref{eqn:L} is affine, we show that this is the case for both $F\circ \iota$ and $R\circ \iota$. The argument for the later was given in the proof of Theorem 3.1 from ~\cite{LS21}. For the former, it suffices to prove that $F_1\circ \iota$ and $F_2\circ \iota$ are affine, where $F_1(\lambda x)=\sum_g \lambda_g f(x_g)\mathbf{e}_g$ and  $F_2(\lambda x)=\sum_g \lambda_g g^{-1}\cdot f(x_g)\mathbf{e}_g$. We show the proof for $F_1$; the proof for $F_2$ is nearly identical.  Consider $v=\sum_{j=1}^N t_j v_j^{g_j}\in G^{\ast N}$. We have $(F_1\circ \iota )(v_j^{g_j}))=f(v_j)\mathbf{e}_{g_j}$, so $(F_1\circ \iota)(v)=\sum_g \lambda_g f(x_g)\mathbf{e}_g=\sum_{g\in G}\lambda_g[\sum_{j\in J_g(v)} \frac{t_j}{\lambda_g}f(v_j)]\mathbf{e}_g=\sum_{j=1}^N t_jf(v_j)\mathbf{e}_{g_j}=\sum_{j=1}^N t_j(F_1\circ \iota)(v_j^{g_j})$ because $f$ is affine.

\subsection{Optimality} To complete the proof of Theorem~\ref{thm:affine}, we show that an orbit partition fails for generic affine maps $f\colon\Delta_{N-1}\rightarrow \mathbb{R}^d$ if $N<t(|G|,d)-d$. This follows from a standard (co-)dimension counting argument.

Let $N=t(r,d)-d-1$. Given any affine map $f\colon\Delta_{N-1}\rightarrow \mathbb{R}^d$, we have seen that the  existence of some collection $(x_g)_{g\in G}$ from $\Delta_{N-1}$ with pairwise support which satisfies equation~\ref{eqn:orbit} is equivalent to a zero of the map $L\colon G^{\ast N}\rightarrow W\oplus \mathbb{R}^\perp[G]$ of equation~\ref{eqn:L}. Let $\Sigma_{N-1}=\{s=(s_1,\ldots, s_N)\in \mathbb{R}^N\mid \sum_{j=1}^N s_j=1\,\, \text{and}\,\, s_j\geq 0\,\,\text{for all}\,\, j\}$ denote the standard $(N-1)$-simplex in $\mathbb{R}^N$. If the images $f(v_j)$ of the vertices of $\Delta_{N-1}$ are generic, we show that any $v=\sum_{j=1}^N t_j v_j^{g_j}$ satisfying $L(v)=0$ forces $t=(t_1,\ldots, t_N)$ in $\Sigma_{N-1}$ to satisfy $N$ linearly independent conditions, a contradiction. 

Consider $(g_1,\ldots, g_N)\in G^{\times N}$ for $v=\sum_{j=1}^N t_j v_j^{g_j}$ above. It follows that $J_g:=\{j\mid g_j=j\}\neq \emptyset$ for each $g\in G$. That $(R\circ i)(v)=0$ means that $\sum_{j\in J_g}t_j=\frac{1}{|G|}$ for all $g\in G$, thereby giving $(|G|-1)$ linearly independent conditions on $t$. Secondly, $(F\circ \iota)(v)=0$ means that $t$ is zero of each of the linear maps $$M_g(s)=\sum_{j\in J_g} s_jf(v_j)-\sum_{j=1}^N \frac{s_j}{|G|}f(v_j)-g\cdot \sum_{j=1}^N \frac{s_j}{|G|}g_j^{-1}\cdot f(v_j)$$ defined on $\Sigma_{N-1}$. As $|G|\geq 3$, $N-1=(|G|-2)(d+1)> d$, and so for any particular $g\in G$ one has $d$ linearly independent conditions provided the $f(v_j)$ are chosen generically. On the other hand, the image of $M(s)=\sum_{g\in G} M_g(s)\mathbf{e}_g$ lies in the subspace $W$ of $\mathbb{R}^d[G]$ of Proposition~\ref{prop:W}. For generic $f(v_j)$, it follows from Proposition~\ref{prop:W} that $M(t)=0$ determines $d(|G|-2)$ linearly independent conditions. Moreover, these conditions are independent from those given by $(R\circ \iota )(v)=0$, again provided the $f(v_j)$ are generic. One therefore has $N$ independent conditions on $t$ in total. 

\section{Genericity Conditions} 
\label{sec:3}

We now show that the translated orbits $a+G\cdot u$ guaranteed by Theorem~\ref{thm:affine} satisfy two generic conditions (see, e.g.,  ~\cite[Lemma 2.11]{FL18} and~\cite[Lemma 4.2]{FL16}), namely that $u$ has minimal stabilizer and that $\Aff(G\cdot u)$ has maximum  dimension. 

\begin{lemma} 
\label{lem:generic}
Let $G$ be a finite group of order $r\geq 3$ and let $\rho\colon G\rightarrow O(d)$ be an orthogonal representation which does not contain a trivial subrepresentation. Let $K=\ker\rho$ and let $m=\max\{ \dim\Aff(G\cdot u)\mid u\in \mathbb{R}^d\}$. If $N=t(r,d)-d$, then the following is true for generic affine maps $f\colon\Delta_{N-1}\rightarrow \mathbb{R}^d$: If $(x_g)_{g\in G}$ are points from $\Delta_{N-1}$ with pairwise disjoint support that satisfy equation~\ref{eqn:orbit}, then (i) $\Stab(u)=K$ and (ii) $\dim\Aff(G\cdot u)=m$.
\end{lemma}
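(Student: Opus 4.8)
The plan is to show that both conditions (i) $\Stab(u)=K$ and (ii) $\dim\Aff(G\cdot u)=m$ are \emph{open} conditions that hold on a dense subset of affine maps, hence generically. The starting observation is that by Proposition~\ref{prop:Fourier}, whenever $(x_g)_{g\in G}$ satisfies equation~\ref{eqn:orbit} one has $u=c_\rho(x)=\frac{1}{|G|}\sum_g g^{-1}\cdot f(x_g)$, so $u$ is a \emph{continuous} function of the tuple $(x_g)$ on the (compact) deleted product $(\Delta_{N-1})^{\times G}_\Delta$ once $f$ is fixed, and jointly continuous in $(f,x)$. Since the solution tuples $x$ are themselves the zero set of the map $L$ of equation~\ref{eqn:L}, which depends continuously on $f$, the collection of values $u$ that arise varies upper-semicontinuously with $f$; combined with compactness this will let me upgrade ``holds for some $f$'' to ``holds for all $f$ in a neighborhood.''

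The main work is the density statement. For (i): the locus of $u\in\mathbb{R}^d$ with $\Stab(u)\supsetneq K$ is $\bigcup_{g\notin K}\Fix(g)$, a finite union of \emph{proper} linear subspaces of $\mathbb{R}^d$ (proper because $\rho$ is faithful modulo $K$ and each $g\notin K$ acts nontrivially), hence a closed set of measure zero, call it $Z_1$. For (ii): the function $u\mapsto\dim\Aff(G\cdot u)$ is lower-semicontinuous and attains its max $m$ on the complement of a closed algebraic subset $Z_2\subsetneq\mathbb{R}^d$ of positive codimension (the condition $\dim\Aff(G\cdot u)<m$ is the vanishing of all $m\times m$ minors of the matrix whose rows are $g\cdot u-u$, $g\in G$, and this is not identically zero by definition of $m$). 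So it suffices to show that for a generic affine map $f$, \emph{every} solution $u$ avoids $Z:=Z_1\cup Z_2$. Here I would run the same degrees-of-freedom/transversality argument as in Section~2.4: fixing a combinatorial type $(g_1,\dots,g_N)\in G^{\times N}$ of a potential solution $v=\sum_j t_j v_j^{g_j}$, the equations $L(v)=0$ cut out (generically in the $f(v_j)$) an affine-linear slice of $\Sigma_{N-1}$ of the expected dimension, the solution $t$ depends smoothly on $f$ on that slice, and $u=u(t,f)$ is then a smooth map into $\mathbb{R}^d$; I want its image to miss the positive-codimension set $Z$. A Sard/parametric-transversality argument — varying the $f(v_j)$ in the ambient $(\mathbb{R}^d)^N$ — shows that for almost every $f$ the map $t\mapsto u(t,f)$ is transverse to each stratum of $Z$, and since $Z$ has codimension $\ge 1$ in $\mathbb{R}^d$ while the relevant solution set of $t$'s is $0$-dimensional (by the count in Section~2.4, $L(v)=0$ imposes $N$ independent conditions on the $(N-1)$-dimensional simplex plus the normalization, pinning $t$ down to finitely many points), transversality forces the image to be disjoint from $Z$. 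Taking the intersection over the finitely many combinatorial types $(g_1,\dots,g_N)$ and over the (finitely many) strata of $Z$ preserves genericity, giving a dense open full-measure set of $f$ for which (i) and (ii) both hold.

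Finally I would reconcile this with the existence half of Theorem~\ref{thm:affine}: that theorem guarantees at least one solution for \emph{every} $f$, so the conclusion is not vacuous, and the genericity argument above shows that for generic $f$ all such solutions have $u\notin Z$, which is exactly (i) and (ii).

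The step I expect to be the main obstacle is making the ``$u$ avoids $Z$'' transversality argument fully rigorous: one has to handle the fact that the solution set of $t$ (for a fixed combinatorial type) can drop dimension or become empty for special $f$, so the correct framework is to consider the incidence variety $\{(f,v): L_f(v)=0\}$, project to the $f$-coordinate, and apply parametric transversality to the composite $(f,v)\mapsto u(f,v)\in\mathbb{R}^d$ relative to the stratification of $Z$ — checking that this composite is a submersion on the incidence variety (which is where one uses that the $f(v_j)$ can be perturbed freely, exactly as the independence-of-conditions claim in Section~2.4 already implicitly does). A secondary subtlety is that $m$ is defined as a max over \emph{all} $u\in\mathbb{R}^d$, not just those arising as $c_\rho(x)$; but since $\{c_\rho(x): x\in(\Delta_{N-1})^{\times G}_\Delta\}$ contains a nonempty open subset of $\mathbb{R}^d$ for generic $f$ (as $f$ can be taken full rank and $N>d$), the max over solutions equals the global max, so no discrepancy arises.
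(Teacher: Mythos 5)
Your proposal is correct and follows essentially the same route as the paper: the paper also identifies the bad locus (the fixed subspaces $\Fix(h)$ for $h\in G\setminus K$ and the vanishing of the $m\times m$ minors of the matrix of the $g\cdot u(s)$) and argues that, for generic $f(v_j)$, membership of $u$ in that locus would impose at least one condition on $t\in\Sigma_{N-1}$ independent of the $N-1$ conditions already cut out by $L(v)=0$, exceeding the available degrees of freedom. Your Sard/parametric-transversality framing is just a more formal version of the paper's ``independent conditions for generic $f$'' count, so the two arguments coincide in substance.
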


As with the proof of optimality in Theorem~\ref{thm:affine}, Lemma~\ref{lem:generic} follows from a degrees of freedom argument. Note that (i) of Lemma~\ref{lem:generic} means that $|G\cdot u|=|G|$ if the representation is faithful, so that each $P(G;u,a)=\Conv(a+G\cdot u)$ is a free orbit polytope. On the other hand, (ii) $m=d$ if the representation is full-dimensional. Thus Theorem~\ref{thm:affine} together with Lemma~\ref{lem:generic}  completes the proof of Theorem~\ref{thm:orbit}. 

\begin{proof}[Proof of Lemma~\ref{lem:generic}]
Let $f\colon\Delta_{N-1}\rightarrow \mathbb{R}^d$ be affine and let $L\colon G^{\ast N}\rightarrow W\oplus\mathbb{R}^\perp[G]$ be the associated map given by equation~\ref{eqn:L}. By Proposition~\ref{prop:Fourier}, equation~\ref{eqn:join}, and the fact that  $f$ is affine, it follows that any $u$ satisfying equation~\ref{eqn:orbit} is of the form $u=|G|c_\rho(\iota (v))=\sum_{j=1}^N t_jg_j^{-1}\cdot f(v_j)$, where $v=\sum_{j=1}^N t_j v_j^{g_j}$ satisfies $L(v)=0$. If $f$ is generic, we have seen in the proof of Lemma~\ref{lem:generic} that corresponding to this $(g_1,\ldots, g_N)\in G^{\times N}$ one has $(N-1)$ linearly independent conditions on $t=(t_1,\ldots, t_N)$ in $\Sigma_{N-1}$. 

For each $s\in \Sigma_{N-1}$, let $u(s)=\sum_{j=1}^N s_jg_j^{-1}\cdot f(v_j)$. For any $h\in G\setminus K$, consider the linear map $L_h(s)= u(s)-h\cdot u(s)$ defined on $\Sigma_{N-1}$. If the $g_j^{-1}\cdot f(v_j)$ are chosen to be in general position, then $L_h$ is not the zero map because $h$ is not in $K$. Moreover, so long as the $f(v_j)$ are generic, the condition that $L_h(t)=0$ is independent from those determined by $L(v)=0$. Thus $u$ is not fixed by any $h\in G\setminus K$, as otherwise $t$ would satisfy $N$ independent conditions.

For maximum dimensionality of $\Aff(G\cdot u)$, let $m$ be as in the statement of Lemma~\ref{lem:generic}. As in ~\cite[Lemma 4.2]{FL16}, let $|G|=r$, fix an ordering $g_1,\ldots, g_r$ of $G$, and consider the $(d+1)\times r$ matrix $$A(s)=\begin{pmatrix} g_1\cdot u(s)  & \cdots & g_r\cdot u(s)\\ 1 & \cdots & 1 \end{pmatrix}$$ defined on $\Sigma_{N-1}$. Each $m\times m$ sub-determinant of $A(s)$ determines a polynomial $P(s)$.  For generic $f(v_j)$, each of these polynomials is non-zero and the condition that $P(t)=0$ is  independent from the linear conditions determined by $L(v)=0$. Thus $\dim\Aff(G\cdot u)=m$, as otherwise $P(t)=0$ for each of these polynomials and so $t$ would satisfy at least $N$ independent conditions.\end{proof} 

\subsection{Generic Isometries} For the proof of Theorem ~\ref{thm:closed} and the calculation of some of the isometry groups in  Theorem~\ref{thm:3poly}, we will need to show that the  orbit polytopes of Theorem~\ref{thm:orbit}  satisfy a minimal symmetry condition. To state this formally, we follow ~\cite{FL16, FL18}, which introduced the notion of a generic linear symmetry group of a linear representation. An analogous group in the orthogonal setting was given in~\cite[Chapter 7]{Fr18}. 

Suppose that $\rho\colon G\rightarrow O(d)$ is a full-dimensional orthogonal representation.  By definition, the ``generating set'' $$\Gens(G):=\{u\in \mathbb{R}^d\mid \Aff(G\cdot u)=\mathbb{R}^d\}$$ is non-empty. Reasoning as in the proof of (ii) of Lemma~\ref{lem:generic}, it can be shown that those $u$ which do not lie in $\Gens(G)$ form a proper algebraic subset of $\mathbb{R}^d$. In particular, $\Gens(G)$ is a dense open set~\cite[Corollary 4.5] {FL16}. 

Let $S_G$ denote the set of all permutations of $G$. For any $u\in\mathbb{R}^d$, let $$\OSym(G, u)=\{\pi\in S_G\mid \text{there exists some} \, A\in O(d)\,\, \text{such that}\,\, A(g\cdot u)=\pi(g)\cdot u\,\,\text{for all}\,\, g\in G\}.$$ If $u\in \Gens(G)$, it is easily seen that any $A\in O(d)$ above is uniquely determined by the permutation $\pi$, and therefore $\OSym(G,u)$ can be identified with the isometry group $\Iso(P(G;u,a))$ of the orbit polytope. Note that for each $g\in G$, the permutation $\pi_g$ given by left multiplication by $g$ corresponds to $\rho_g$ under this identification. The \textit{generic isometry group} is defined by $$\OSym(G)=\bigcap_{u\in \Gens(G)}\OSym(G,u).$$ For $u\in\Gens(G)$, to say that $\OSym(G,u)=\OSym(G)$ therefore means that the $d$-dimensional orbit polytope $P(G;u,a)$ has minimal isometry group. 

	Considering the standard inner product on $\mathbb{R}^d$, an easy observation communicated to the authors by Frieder Ladisch shows that $\pi\in \OSym(G,u)$ if and only if $u$ is a zero of the polynomial $$P^\pi_{g,h}(u)=\langle g\cdot u, h\cdot u\rangle-\langle \pi(g)\cdot u, \pi(h)\cdot u\rangle$$ for all distinct $g,h\in G$. If $\pi\in \OSym(G)$, then each $P^\pi_{g,h}$ vanishes on $\Gens(G)$, a non-empty open set in $\mathbb{R}^d$. Therefore $P^\pi_{g,h}(u)$ is the zero polynomial and $\OSym(G)\leq \OSym(G,u)$ for any $u\in \mathbb{R}^d$. If $\pi\in S_G \setminus \OSym(G)$, on the other hand, then by definition there is some $u\in \Gens(G)$ with $\pi\notin \OSym(G,u)$ and so at least one of the $P^\pi_{g,h}(u)$ is non-zero. In particular, the set of all $u\in \mathbb{R}^d$ with $\OSym(G,u)>\OSym(G)$ is a proper algebraic subset of $\mathbb{R}^d$.

\begin{lemma}
\label{lem:iso} Let $G$ be a finite group of order $r\geq 3$, let $\rho\colon G\rightarrow O(d)$ be a faithful and full-dimensional representation, and let $N=t(r,d)-d$. Then the following holds for generic affine maps $f\colon\Delta_{N-1}\rightarrow \mathbb{R}^d\colon$   If $(x_g)_{g\in G}$ are points from $\Delta_{N-1}$ with pairwise disjoint support that satisfy equation~\ref{eqn:orbit}, then $\OSym(G,u)=\OSym(G)$. 
\end{lemma}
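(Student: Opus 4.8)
The plan is to follow the degrees‑of‑freedom template already used for Lemma~\ref{lem:generic}, but now to argue against \emph{every} potential ``extra'' symmetry simultaneously. As established just before the statement, for a fixed $\pi\in S_G$ one has $\pi\in\OSym(G,u)$ if and only if $u$ is a common zero of the finitely many polynomials $P^\pi_{g,h}(u)=\langle g\cdot u,h\cdot u\rangle-\langle\pi(g)\cdot u,\pi(h)\cdot u\rangle$ over distinct $g,h\in G$, and that for $\pi\in S_G\setminus\OSym(G)$ at least one such $P^\pi_{g,h}$ is \emph{not} the zero polynomial on $\mathbb{R}^d$. So the target condition $\OSym(G,u)=\OSym(G)$ is the complement of a proper algebraic subset $Z$ of $\mathbb{R}^d$, namely $Z=\bigcup_{\pi\notin\OSym(G)}\bigcap_{g\neq h}\{P^\pi_{g,h}=0\}$, a finite union (since $S_G$ is finite) of proper algebraic sets.

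First I would recall, exactly as in the proof of Lemma~\ref{lem:generic}, that for generic affine $f$ any $u$ coming from a solution of equation~\ref{eqn:orbit} has the form $u=u(t)=\sum_{j=1}^N t_j\,g_j^{-1}\cdot f(v_j)$, where $v=\sum_j t_j v_j^{g_j}$ satisfies $L(v)=0$, and where the combinatorial type $(g_1,\dots,g_N)$ forces $(N-1)$ linearly independent affine conditions on $t\in\Sigma_{N-1}$. Thus each such $u$ is the image of a point $t$ lying in an affine subspace of $\Sigma_{N-1}$ of dimension at most $1$ — in effect $u$ ranges over at most a $1$‑parameter family, determined by the $f(v_j)$. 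Next, for each fixed combinatorial type $(g_1,\dots,g_N)$ and each $\pi\in S_G\setminus\OSym(G)$, pick a polynomial $P^\pi_{g,h}$ that is not identically zero, and consider the polynomial $Q(s)=P^\pi_{g,h}\big(u(s)\big)$ in $s\in\Sigma_{N-1}$, which through $u(s)$ depends polynomially on the $f(v_j)$. The key claim is that for generic choices of the $f(v_j)$ this $Q$ is not the zero polynomial on the relevant (at most $1$‑dimensional) affine slice of $\Sigma_{N-1}$ on which $t$ is constrained; granting this, $Q(t)=0$ is one further condition genuinely independent of the $(N-1)$ linear ones (and of the simplex normalization), so $t$ would have to satisfy $N$ independent conditions in an $(N-1)$‑dimensional object, which is impossible for generic $f$. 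Taking the (finite) union over all combinatorial types $(g_1,\dots,g_N)\in G^{\times N}$ and all $\pi\notin\OSym(G)$ gives that, outside a measure‑zero / closed nowhere‑dense set of affine maps $f$, no solution $u$ lies in $Z$, i.e.\ $\OSym(G,u)=\OSym(G)$ for every such $u$.

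The step I expect to be the main obstacle — and the one deserving the most care — is the non‑degeneracy claim: that $Q(s)=P^\pi_{g,h}(u(s))$ does not vanish identically on the constrained slice for generic $f(v_j)$. The subtlety is that $u(s)$ is a \emph{constrained} linear image of $s$, so one must exhibit at least one admissible configuration $(f(v_j))$ (with the right combinatorial type realizable) for which the resulting $u$ lands in $\Gens(G)\setminus Z$; concretely, since $Z$ is a proper algebraic subset and $\Gens(G)$ is a dense open set, one can choose the $f(v_j)$ so that the $1$‑parameter family of $u$'s is not contained in $Z$, using that full‑dimensionality guarantees such $u$ exist and that the map from configurations to this family is ``surjective enough'' (this is the analogue of the argument in Lemma~\ref{lem:generic}(ii) that $\dim\Aff(G\cdot u)=m$ is independent of the linear conditions). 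Once that single good configuration is produced, the locus of bad $f$ is cut out by the vanishing of a nonzero polynomial, hence has measure zero and empty interior; intersecting the finitely many such conditions (over types and over $\pi$) with the genericity already required in Lemmas~\ref{lem:generic} preserves the ``dense open, full measure'' property, completing the proof.
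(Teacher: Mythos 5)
Your proposal is correct and takes essentially the same route as the paper's proof: for each $\pi\in S_G\setminus\OSym(G)$ one selects a not-identically-zero $P^\pi_{g,h}$, pulls it back to $Q(s)=P^\pi_{g,h}(u(s))$ on $\Sigma_{N-1}$, and shows that for generic $f$ the condition $Q(t)=0$ is independent of the linear conditions coming from $L(v)=0$, so $t$ would satisfy $N$ independent conditions in the $(N-1)$-dimensional simplex, a contradiction; the finite union over permutations and combinatorial types then preserves genericity. One cosmetic remark: the $N-1$ independent linear conditions already cut the constrained slice of $\Sigma_{N-1}$ down to dimension $0$ rather than $1$, which only makes your non-degeneracy step easier, not harder.
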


\begin{proof}[Proof of Lemma~\ref{lem:iso}] 
For an affine map $f\colon\Delta_{N-1}\rightarrow \mathbb{R}^d$, again let $L\colon G^{\ast N}\rightarrow W\oplus\mathbb{R}^\perp[G]$ be the linear map given by equation~\ref{eqn:L} and consider $u=\sum_{j=1}^N t_jg_j^{-1}\cdot f(v_j)$ where $v=\sum_{j=1}^N t_j v_j^{g_j}$ satisfies $L(v)=0$. As before, let $u(s)=\sum_{j=1}^N s_j g_j^{-1}\cdot f(v_j)$ for $s\in \Sigma_{N-1}$. For each $\pi\in S_G$ and any distinct $g,h\in G$, consider the polynomial $P_{g,h}(s)=P^\pi_{g,h}(u(s))$. If the $g_j^{-1}\cdot f(v_j)$ are chosen to be in general position, then $P_{g,h}(s)$ is the zero polynomial exactly when $P^\pi_{g,h}(u)$ is.  By the observations above, there is some $P^\pi_{g,h}$ which is not the zero polynomial precisely when $\pi\in S_G\setminus\OSym(G)$. For such $P^\pi_{g,h}$  and generic $f(v_j)$, the condition determined by $P_{g,h}(t)=0$ is independent from the  linear conditions determined by $L(v)=0$. Thus $\pi\notin \OSym(G,u)$ for any $\pi\notin \OSym(G)$, as otherwise $t\in \Sigma_{N-1}$ would satisfy $N$ independent conditions.\end{proof}

To prove Theorem~\ref{thm:closed}, we now only need to appeal to a result of ~\cite{FL16}. As there, we shall say that a faithful and full-dimensional representation $\rho\colon G\rightarrow O(d)$ is (orthogonally) \textit{closed} if $\OSym(G)$ consists only of those permutations determined by left multiplication in $G$. Thus a closed representation means precisely that $\Iso(P(G;u,a))=G$ whenever $u\in \Gens(G)$.  

By replacing orthogonal representations with arbitrary linear ones and the orthogonal group $O(d)$ with the general linear group $GL(\mathbb{R}^d)$, one has the groups $\LinSym(G,u)$ and $\LinSym(G)$ of~\cite{FL16} analogous to $\OSym(G,u)$ and $\OSym(G)$ above. One likewise has an analogous definition of linear closure. Theorem A of ~\cite{FL16} shows that the inclusion representation $G\hookrightarrow GL(\mathbb{R}^d)$ is linearly closed whenever $G\le GL(\mathbb{R}^d)$ is the affine symmetry group of a $d$-dimensional vertex transitive polytope. Of course, linear closure of an orthogonal representation implies orthogonal closure of the representation. Thus if $G<O(d)$ is the isometry group of a vertex-transitive $d$-polytope, the inclusion representation is automatically (orthogonally) closed. The proof of Theorem~\ref{thm:closed} follows immediately. 

\begin{proof}[Proof of Theorem~\ref{thm:closed}] Let $P(r,d)=\Conv(a+G\cdot u)$ be the $d$-dimensional free orbit polytope guaranteed by Theorem~\ref{thm:orbit}. By Lemma~\ref{lem:iso}, we have $\OSym(G,u)=\OSym(G)$. As the inclusion representation $G\hookrightarrow O(d)$ is closed, $\OSym(G)$ is given by left multiplication and therefore $\Iso(P(G;u,a))=G$. \end{proof} 

Another class of linearly closed representations $\rho\colon G\rightarrow GL(\mathbb{R}^d)$ are those which are \textit{absolutely irreducible}~\cite[Theorem 5.5] {FL16}, that is, irreducible when viewed as a representation over the complex numbers. In particular, any absolutely irreducible orthogonal representation $\rho\colon G\rightarrow O(d)$ must be (orthogonally) closed. Any real irreducible representation is  full-dimensional, and in fact $\Gens(G)=\mathbb{R}^d\setminus\{0\}$ because the linear subspace spanned by any orbit is a $G$-invariant. Note also that $|G|\geq 3$ for any (real) irreducible representation $\rho: G\rightarrow O(d)$ if $d\geq 2$.

\begin{corollary} 
\label{cor:absolute}
Let $d\geq 2$ and let $\rho\colon G\rightarrow O(d)$ be a faithful and absolutely irreducible representation of a finite group $G$ of order $r$. Then a generic set of $(r-2)(d+1)+2$ points in $\mathbb{R}^d$ can can be $P(r,d)$-partitioned by a vertex-transitive polytope with $\Iso(P(r,d))=G$.  
\end{corollary}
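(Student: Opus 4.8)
The plan is to deduce Corollary~\ref{cor:absolute} by reducing it to Theorem~\ref{thm:closed} via the observation, already recorded in the paragraph preceding the statement, that an absolutely irreducible orthogonal representation is automatically faithful-permitting, full-dimensional, and (orthogonally) closed. First I would verify the hypotheses of the machinery apply: since $\rho$ is irreducible over $\mathbb{R}$ it contains no trivial subrepresentation (a trivial summand would be a proper invariant subspace), so Theorem~\ref{thm:affine} and Lemmas~\ref{lem:generic} and~\ref{lem:iso} are available. As noted in the excerpt, any real irreducible representation is full-dimensional with $\Gens(G)=\mathbb{R}^d\setminus\{0\}$, because the $\mathbb{R}$-span of any nonzero orbit is a nonzero invariant subspace, hence all of $\mathbb{R}^d$; and for $d\geq 2$ irreducibility forces $r=|G|\geq 3$, which is the standing hypothesis of Theorem~\ref{thm:orbit}.

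Next I would invoke Theorem~\ref{thm:orbit}: a generic set of $t(r,d)-d=(r-2)(d+1)+2$ points in $\mathbb{R}^d$ can be $P(r,d)$-partitioned by a free orbit polytope $P(G;u,a)=\Conv(a+G\cdot u)$ for $\rho$, where by genericity (Lemma~\ref{lem:generic}(i) together with faithfulness) $u$ has trivial stabilizer so the vertex set is identifiable with $G$, and $u\in\Gens(G)$. Then I would apply Lemma~\ref{lem:iso} to this same generic map to conclude $\OSym(G,u)=\OSym(G)$. Finally, since $\rho$ is absolutely irreducible, \cite[Theorem 5.5]{FL16} (as remarked in the preceding paragraph) gives that $\rho$ is linearly closed, hence orthogonally closed, i.e.\ $\OSym(G)$ consists exactly of the permutations arising from left multiplication in $G$. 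Under the identification of $\OSym(G,u)$ with $\Iso(P(G;u,a))$ valid for $u\in\Gens(G)$, and recalling that $\pi_g$ corresponds to $\rho_g$, this yields $\Iso(P(G;u,a))=G$. Hence the generic point set is $P(r,d)$-partitioned by a vertex-transitive $d$-polytope with isometry group exactly $G$, which is the assertion of the corollary.

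There is essentially no genuine obstacle here, as the corollary is a direct specialization: every nontrivial input (existence and optimality of the orbit partition, the two genericity lemmas, the orthogonal closure of absolutely irreducible representations) has already been established in the excerpt. The only point requiring a line of care is confirming that the \emph{same} generic affine map $f$ can be chosen to simultaneously satisfy the conclusions of Theorem~\ref{thm:affine}, Lemma~\ref{lem:generic}, and Lemma~\ref{lem:iso}: this is immediate because each lemma asserts its conclusion holds on a dense open full-measure set of maps, and a finite intersection of such sets is again dense, open, and of full measure. I would state this intersection remark explicitly and then assemble the three ingredients in the order above; the whole proof should occupy only a few lines.
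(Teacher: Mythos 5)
Your argument is correct and follows exactly the route the paper intends: absolute irreducibility gives (orthogonal) closure via \cite[Theorem 5.5]{FL16} and full-dimensionality, so the corollary reduces to Theorem~\ref{thm:orbit} together with Lemma~\ref{lem:iso}, just as in the proof of Theorem~\ref{thm:closed}. Your added remark about intersecting the finitely many generic (dense open, full-measure) sets of affine maps is a reasonable point of care that the paper leaves implicit.
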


\section{Examples}
\label{sec:4}

\subsection{Finite Subgroups of Spheres}
\label{sec:4.1}

Let $\mathbb{F}$ be either the field $\mathbb{C}$ of complex numbers or the skew field $\mathbb{H}$ of quaternions. The corresponding unit spheres $S(\mathbb{F})$ from groups under $\mathbb{F}$-multiplication. Left multiplication by any  $u\in S(\mathbb{F})$ defines an $\mathbb{F}$-isometry, so that $S(\mathbb{C})=S^1=U(1)$ and $S(\mathbb{H})=S^3$ are the 1-dimensional unitary and symplectic groups, respectively.

For any finite subgroup $G$ of $S(\mathbb{F})$, left multiplication of $\mathbb{F}$ by $G$ defines a faithful representation, and in fact $\Stab(u)$ is trivial for all non-zero $u\in \mathbb{F}$. The corresponding free orbit polytopes are the translates of $\Conv(G)u$ for any $u\neq 0$, or equivalently the translated and scaled $\mathbb{F}$-unitary copies of $\Conv(G)$. We therefore expect the upper bound $N_{P(G)}\leq (|G|-2)(d+1)+2$ given by Theorem~\ref{thm:orbit} for $P(G)=\Conv(G)$ to exceed the exact value by $\dim(O(d))-\dim S(\mathbb{F})$. This will be confirmed below when $\mathbb{F}=\mathbb{C}$ (in which case the difference is zero) and is left as a conjecture when $\mathbb{F}=\mathbb{H}$ (in which case the difference is 3).

\subsubsection{Regular Polygons } The non-trivial finite subgroups $G<S^1$ are precisely the cyclic groups $\mathbb{Z}_r$ realized as the $r$-th roots of unity. Letting $\omega_r=e^{2\pi i/r}$ denote the standard $r$-th root of unity, $\mathbb{Z}_r$ is the vertex set  $\{\omega_r^k\mid 0\leq k<r\}$ of the regular $r$-gon $\Conv(\mathbb{Z}_r)$ inscribed in the unit circle when $r\geq 3$. In particular, the inclusion $ \mathbb{Z}_r\hookrightarrow U(1)\cong SO(2)$ is full-dimensional when $r\geq 3$ (and in fact irreducible). Any regular $r$-gon in the plane is of the form $P(\mathbb{Z}_r;u,a)$ for some $u\neq 0$ and $a\in \mathbb{C}$, so Theorem~\ref{thm:orbit} recovers Theorem~\ref{thm:polygon} when $r\geq 3$.

\subsubsection{Regular 4-polytopes} 
 Proceeding analogously, we consider the finite subgroups of $S^3$. It is a classical fact that these are the binary dihedral groups $D_r^*$ for all $r\geq 2$ and the binary polyhedral groups $T^*,O^*,$ and $I^*$, the pullbacks under the canonical 2-fold covering homomorphism $\phi\colon SU(2)\rightarrow SO(3)$ of the Dihedral groups $D_r$ (viewing $D_2$ as $\mathbb{Z}_2\times \mathbb{Z}_2$) and of the rotation groups $T,O$ and $I$ of the regular tetrahedron, octahedron, and icosahedron, respectively. Each inclusion $ G\hookrightarrow Sp(1)\cong SU(2)\subset SO(4)$ is irreducible and so is in particular full--dimensional.
	
	Viewing $\mathbb{Z}_r$ as the $r$-th roots of unity in $\mathbb{C}$ as before, one has the explicit decomposition $D_r^*=\mathbb{Z}_{2r}\cup \mathbb{Z}_{2r}j\subset \mathbb{H}$. Thus $\Conv(D_r^\ast)=P_{2r} \oplus P_{2r}j$ is the join of two congruent regular $2r$--gons lying in orthogonal planes  in $\mathbb{R}^4$ which intersect at the origin. In particular, $D_4$ is the Quaternion group $Q_8=\{\pm 1, \pm i, \pm j, \pm k\}$, and $\Conv(Q_8)$ is the standard regular 4-dimensional cross-polytope. This gives part (a) of Theorem~\ref{thm:4poly}. When $r\geq 3$, the $P(D_r^*)$ are vertex and facet transitive ``fusils"~\cite{J18} with $\Iso(P(D_r^*))\cong D_{2r}\times D_{2r}\times \mathbb{Z}_2$, whose tetrahedral facets are composed of congruent isosceles triangle faces. For parts (b) and (c) of Theorem~\ref{thm:4poly}, we note that $T^*$ consists of the 24 units of the Hurwitz Quaternion ring, and these are the vertices of the standard regular 24--cell. Likewise, the 120 vertices of $I^*$ are the vertex set of a regular 600--cell (see, e.g,~\cite[Chapter 4]{Du64}).

\subsection{Coxeter Permutahedra and Alternahedra}

 Recall that a $d$-dimensional finite reflection group is a finite subgroup of $O(d)$ which is generated by reflections across linear hyperplanes. A finite reflection group  is called \textit{irreducible} if the inclusion representation is irreducible. The irreducible finite reflection groups are precisely the isometry groups of certain uniform polytopes, including those of all regular polytopes, while all other finite reflection groups are the products of these (see, e.g.,~\cite[Chapter 2]{Hu92}). 
 
It is easily seen that an orbit polytope $P(G;u,a)$ of a finite reflection group is free precisely when $u$ is not contained in any linear hyperplane determined by a reflection of $G$. The free orbit polytopes are commonly called Coxeter permutahedra, with the classical permutahedra (see, e.g., ~\cite{Zi95}) arising from the standard representation of the symmetric group. All Coxeter permutahedra for a given reflection group are combinatorially equivalent (see, e.g., ~\cite[Proposition 2.1]{Ho12}, which need not be true for free orbit polytopes of a given representation in general (see, e.g.,~\cite{On93}). 

Owing to the classification of finite reflection groups, it follows quickly that $\Iso(P(G;u,a))=G$ for the the Coxeter permutahedra guaranteed by Theorem~\ref{thm:orbit}. For irreducible groups, this is an immediate consequence of Theorem~\ref{thm:closed}. For a direct product $G=\prod_{j=1}^k G_j$ of irreducible $d_j$-dimensional reflection groups $G_j$, one has  $P(G;u,a)=\prod_{j=1}^kP(G_j;u_j,a_j)$, for any $u=(u_1,\ldots, u_k)\in \oplus_{j=1}^k \mathbb{R}^{d_j}\setminus\{0\}$ and $a=(a_1,\ldots, a_k) \in \oplus_{j=1}^k \mathbb{R}^{d_j}$. If $u_j\neq0$ for all $1\leq j\leq k$, then each $P(G_j;u_j,a_j)$ is full dimensional and so is $P(G;u,a)$. On the other hand, $\Iso(P(G;u,a))=\prod_{j=1}^k \Iso(P(G;u_j,a_j))=\prod_{j=1}^k G_j=G$ for a generic choice of $u$.  Thus the result again follows from Theorem~\ref{thm:closed}.

\begin{corollary} Let $d\geq 2$, let $G<O(d)$ be a finite reflection group, and let $r=|G|$. Then a generic set of $(r-2)(d+1)+2$ points in $\mathbb{R}^d$ can be $P(r,d)$-partitioned by a Coxeter permutahedra with isometry group $G$. 
\label{cor:Coxeter} 
\end{corollary}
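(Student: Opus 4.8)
The plan is to reduce the corollary to the orthogonally closed case already settled by Theorem~\ref{thm:closed} (equivalently, by the genericity statement of Lemma~\ref{lem:iso}), handling reducible reflection groups by a Cartesian product argument. First I would invoke the classification of finite reflection groups to write $G=\prod_{j=1}^{k}G_j$ with each $G_j<O(d_j)$ an irreducible finite reflection group and $\sum_{j}d_j=d$; here I take $G$ to be essential (equivalently, its reflection representation full-dimensional), since otherwise no $d$-dimensional orbit polytope exists. The case $k=1$ is the irreducible case. In every case the inclusion $G\hookrightarrow O(d)$ is faithful and full-dimensional with $r=|G|\geq 3$ (as $d\geq 2$), so Theorem~\ref{thm:orbit} already gives, for a generic set of $(r-2)(d+1)+2$ points in $\mathbb{R}^d$, a $P(r,d)$-partition by a free orbit polytope $P(G;u,a)=\Conv(a+G\cdot u)$; for a reflection group a trivial stabilizer of $u$ is equivalent to $u$ lying off every reflecting hyperplane, so $P(G;u,a)$ is a Coxeter permutahedron. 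It remains only to arrange $\Iso(P(G;u,a))=G$, and by Lemma~\ref{lem:iso} --- which gives $\OSym(G,u)=\OSym(G)$ for the orbit arising from a generic partition --- it is enough to show that the inclusion representation is orthogonally closed, i.e., that $\OSym(G)$ consists only of left translations.

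For $k=1$ this is essentially immediate: the reflection representation of an irreducible finite Coxeter group is absolutely irreducible, hence orthogonally closed by the remark following Corollary~\ref{cor:absolute}; equivalently, the classification realizes each such $G$ as the full isometry group of a vertex-transitive $d$-polytope (a regular or uniform polytope, or its own Coxeter permutahedron), so Theorem~\ref{thm:closed} applies directly.

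For $k\geq 2$ I would bootstrap closedness of the product representation from that of its irreducible factors. Since the convex hull of a Cartesian product of sets is the product of their convex hulls, $P(G;u,a)=\prod_{j=1}^{k}P(G_j;u_j,a_j)$ for all $u=(u_1,\dots,u_k)$ and $a=(a_1,\dots,a_k)$. Fix one $u^*=(u_1^*,\dots,u_k^*)$ with each $u_j^*\in\Gens(G_j)$ --- so $u^*\in\Gens(G)$ --- chosen generically enough that the factors $P(G_j;u_j^*,a_j)$ are pairwise non-similar and none is itself a nontrivial Cartesian product, and that each $u_j^*$ lies in the dense open locus on which $\OSym(G_j,u_j^*)=\OSym(G_j)$. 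By the classical description of the isometry group of a Cartesian product of polytopes, $\Iso(P(G;u^*,a))=\prod_{j=1}^{k}\Iso(P(G_j;u_j^*,a_j))$; passing to the corresponding $\OSym$-groups and applying the $k=1$ case, $\OSym(G,u^*)=\prod_{j=1}^{k}\OSym(G_j)=\prod_{j=1}^{k}G_j=G$. Hence $\OSym(G)\leq\OSym(G,u^*)=G$, and since $\OSym(G)\geq G$ always, $\OSym(G)=G$; equivalently, this $u^*$ exhibits $G$ as the isometry group of the vertex-transitive $d$-polytope $P(G;u^*,a)$, so Theorem~\ref{thm:closed} again applies. Either way, Lemma~\ref{lem:iso} now yields $\Iso(P(G;u,a))=G$ for the Coxeter permutahedron produced by the partition, completing the proof.

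The only step that is not routine bookkeeping is the identification $\Iso(\prod_{j}Q_j)=\prod_{j}\Iso(Q_j)$ for the product Coxeter permutahedron: one must rule out both isometries exchanging distinct factors and isometries of $\prod_{j}Q_j$ that fail to respect the factorization, and this is precisely where the generic choice of $u^*$ --- forcing the factor polytopes to be pairwise non-similar and directly indecomposable --- is needed. Once closedness of the product representation is established, nothing further must be checked: the degrees-of-freedom argument of Lemma~\ref{lem:iso} carries over verbatim.
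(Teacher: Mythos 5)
Your proposal is correct and follows essentially the same route as the paper: decompose $G$ into irreducible reflection groups via the classification, handle the irreducible case by closedness (Theorem~\ref{thm:closed}, or equivalently absolute irreducibility), and handle products via $P(G;u,a)=\prod_j P(G_j;u_j,a_j)$ together with $\Iso(\prod_j Q_j)=\prod_j\Iso(Q_j)$ for a generic choice of $u$. The paper is in fact terser than you are on the one step you flag as non-routine --- it simply asserts the product formula for the isometry group ``for a generic choice of $u$'' --- so your explicit remark that one must rule out factor-exchanging and non-factor-respecting isometries is a faithful (and slightly more careful) rendering of the same argument.
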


For irreducible $d$-dimensional reflection groups, we note that the isometry group of any Coxeter permutahedra of Corollary~\ref{cor:Coxeter} is precisely $G$ so long as $d\geq 3$ and $G$ is neither (1) the isometry group $S_d$ of the $(d-1)$-simplex nor (2) one of the exceptional groups $F_4$ or $E_6$~\cite[Chapter 11.6]{J18}. 
 
Given a finite reflection group $G$, we also consider its index two rotational subgroup $G^+$, whose corresponding free orbit polytopes we shall call Coxeter alternahedra (see ~\cite{CrKe06} for when $G=S_n$ is the symmetric group). Provided one takes $u\in \mathbb{R}^d\setminus\{0\}$ to be trivially stabilized by the full reflection group $G$, the resulting alternahedron is obtained by ``alternation"~\cite[Chapter 8.6]{Cox73} of the Coxeter permutahedron, that is, by taking the convex hull of those vertices lying on the permutahedron which are indexed by the subgroup $G^+$. An easy adaption of the the proof of Lemma~\ref{lem:generic} shows that one can ensure that $u$ has trivial stabilizer with respect to all the full group $G$ and not just $G^+$, so that the alternahedra guaranteed by Theorem~\ref{thm:orbit} are always alternated permutahedra.

The isometry groups in (f)--(h) of Theorem~\ref{thm:3poly} are given by the following proposition:

\begin{proposition} 
\label{prop:alternahedra} 
Let $d\geq 3$, let $G^+<SO(d)$ be the rotational subgroup of an irreducible finite reflection group $G<O(d)$, and let $r=|G^+|$. Then a generic set of $(r-2)(d+1)+2$ points in $\mathbb{R}^d$ can be $P(r,d)$-partitioned by a Coxeter alternahedra with isometry group $G^+$.
\end{proposition}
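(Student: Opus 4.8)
The plan is to deduce Proposition~\ref{prop:alternahedra} from Theorem~\ref{thm:orbit} applied to the rotational subgroup $G^+$, together with Lemma~\ref{lem:iso} and an identification of $\OSym(G^+)$ with $G^+$ itself; the work all goes into showing that the inclusion representation $G^+\hookrightarrow SO(d)$ is (orthogonally) closed, at least in the cases relevant to Theorem~\ref{thm:3poly}(f)--(h). First I would note that $G^+$, being an index-two subgroup of an irreducible finite reflection group $G$, is itself full-dimensional: any $G^+$-orbit spans a $G^+$-invariant subspace, which is also $G$-invariant unless it is sent to another subspace by a reflection, and a short argument (as in the Coxeter alternahedra discussion just above) shows that for $u$ not on any reflecting hyperplane the affine hull is all of $\mathbb{R}^d$, so $\Gens(G^+)\neq\emptyset$. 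Then Theorem~\ref{thm:orbit}, applied to $\rho\colon G^+\hookrightarrow SO(d)\subset O(d)$, already gives that a generic set of $(r-2)(d+1)+2$ points can be $P(r,d)$-partitioned by a free orbit polytope for $\rho$, with $r=|G^+|$; and by Lemma~\ref{lem:iso} the resulting $u$ satisfies $\OSym(G^+,u)=\OSym(G^+)$, hence $\Iso(P(G^+;u,a))=\OSym(G^+)$.

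It therefore remains to compute $\OSym(G^+)$. The clean approach is to invoke the absolute-irreducibility criterion \cite[Theorem 5.5]{FL16} recalled in Corollary~\ref{cor:absolute}: if the defining representation of $G^+$ on $\mathbb{R}^d$ is absolutely irreducible, then it is orthogonally closed, i.e.\ $\OSym(G^+)$ consists only of the left-multiplication permutations, giving $\Iso(P(r,d))=G^+$ and finishing the proof. For the three cases at hand this is exactly what happens: for $G^+=A_4$ acting as the rotation group of the icosahedron (the relevant case for (f), where $I$ has $\Iso(I)\cong A_4$ in the notation of Theorem~\ref{thm:3poly}), for $G^+=S_4$ as the rotation group of the cube/octahedron (case (g), the snub cube), and for $G^+=A_5$ as the rotation group of the dodecahedron/icosahedron (case (h), the snub dodecahedron), the standard $3$-dimensional representation is real and remains irreducible over $\mathbb{C}$ — these are the well-known $3$-dimensional irreducible characters of $A_4$ (a rotation-only version), $S_4$, and $A_5$. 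Hence Corollary~\ref{cor:absolute} applies directly with $d=3$ and yields $\Iso(P(r,d))=G^+$, and the free orbit polytope produced is an alternated Coxeter permutahedron by the remark preceding the proposition, matching the ``snub'' descriptions.

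The one subtlety — and the step I expect to be the genuine obstacle — is verifying that the $3$-dimensional representation really is absolutely irreducible in each case, and more importantly handling the general statement ``$d\geq 3$, $G^+$ the rotational subgroup of an \emph{arbitrary} irreducible finite reflection group $G$'': for a general irreducible $G$ the standard representation restricted to $G^+$ need not stay irreducible, and even when it does one must rule out the possibility that $\OSym(G^+)$ picks up extra permutations coming from symmetries of the orbit polytope not realized by $G^+$ (for instance a reflection of $G$ itself could in principle act as a symmetry of the alternahedron). So the robust argument is not to quote absolute irreducibility blindly but to argue: $G\le \OSym(G^+,u)$ would force $P(G^+;u,a)$ to be invariant under a reflection of $G$, hence under all of $G$, hence the $r$ vertices of $G^+\cdot u$ would be a union of $G$-orbits, impossible since $|G\cdot u|=|G|>|G^+|=r$ for $u\in\Gens(G^+)\cap\Gens(G)$; combined with the classification of which permutations can arise (via the vanishing-polynomial criterion for $\OSym(G,u)$ recalled before Lemma~\ref{lem:iso}), one pins down $\OSym(G^+)=G^+$. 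I would present the absolute-irreducibility route as the short proof valid for the cases (f)--(h) needed in Theorem~\ref{thm:3poly}, and remark that the general case follows from \cite[Theorem A]{FL16} applied to the (vertex-transitive) alternahedron together with Lemma~\ref{lem:iso}, exactly as in the proof of Theorem~\ref{thm:closed}.
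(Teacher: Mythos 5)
Your overall strategy --- reduce to Corollary~\ref{cor:absolute} by showing that the inclusion $G^+\hookrightarrow SO(d)$ is absolutely irreducible --- is exactly the paper's, and your verification for the three cases $A_4$, $S_4$, $A_5$ in dimension $3$ is fine. The gap is in the general case, which you flag but do not close. The proposition is stated for the rotational subgroup of an \emph{arbitrary} irreducible finite reflection group in any dimension $d\geq 3$, and neither of your fallback arguments establishes $\OSym(G^+)=G^+$ there: the orbit-counting argument only rules out permutations induced by elements of $G$ itself (and a reflection $s$ need not even preserve the orbit $G^+\cdot u$, since $s(g^+\cdot u)\in (G\setminus G^+)\cdot u$), whereas $\OSym(G^+,u)$ could a priori contain permutations realized by isometries lying in neither $G^+$ nor $G$; and invoking Theorem A of \cite{FL16} or Theorem~\ref{thm:closed} ``applied to the alternahedron'' is circular, since those results require knowing in advance that $G^+$ \emph{is} the full isometry group of a vertex-transitive $d$-polytope --- which is precisely the conclusion being sought.

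The missing ingredient is that $\rho^+=\rho|_{G^+}$ is in fact always complex irreducible when $d\geq 3$, and the paper proves this with a short character computation. By \cite{DwWi01} the reflection representation $\rho$ is complex irreducible, so $\sum_{g\in G}|\chi(g)|^2=|G|=2|G^+|$. Writing $\langle\chi^+,\chi^+\rangle=\sum_j d_j^2$ for the decomposition into irreducibles, one gets $(\sum_j d_j^2)\,|G^+|=2|G^+|-\sum_{s\notin G^+}|\chi(s)|^2$, so either $\langle\chi^+,\chi^+\rangle=1$ (irreducible) or $\langle\chi^+,\chi^+\rangle=2$, and the latter forces $\chi(s)=0$ for every reflection $s\in G\setminus G^+$. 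Since a reflection has trace $d-2\neq 0$ for $d\geq 3$, the second alternative is impossible. (This also explains why the statement fails for $d=2$: there $\chi(s)=0$ and $\rho^+$ does split over $\mathbb{C}$, with $\OSym(\mathbb{Z}_r)=D_r$.) With this lemma in hand, Corollary~\ref{cor:absolute} applies uniformly and the rest of your argument goes through.
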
 

 We note that Proposition~\ref{prop:alternahedra} is false when $d=2$, as follows by considering the Dihedral group $D_r$ when $r\geq 3$, in which case $D_r^+=\mathbb{Z}_r$ and the corresponding alternahedra are regular $r$-gons.

\begin{proof}[Proof of Proposition~\ref{prop:alternahedra}]

Let $\rho\colon G\hookrightarrow O(d)$ be the inclusion and let $\rho^+\colon G^+\hookrightarrow SO(d)$ be its restriction to $G^+$. We show that $\rho^+$ is absolutely irreducible and appeal to Proposition~\ref{cor:absolute}. The proof is a basic character theory argument along the lines of~\cite[Proposition 5.1]{FH04}. 
 
 Viewing $\rho$ and $\rho^+$ as complex representations, denote their characters by $\chi$ and $\chi^+$, respectively. As shown in~\cite{DwWi01}, $\rho$ is in fact complex irreducible, so that $\langle \chi, \chi\rangle=|G|$. Thus $2|G|^+=\langle \chi, \chi\rangle =\sum_{g\in G}|\chi(g)|^2=\sum_{r\in G^+} |\chi^+(r)|^2+\sum_{s\notin G^+}|\chi(s)|^2$. Expressing $\chi^+=\sum_{j=1}^k d_i \chi_i^+$ as the sum of its irreducible characters, linearity and orthogonality of characters gives $\sum_{r\in G^+} |\chi^+(r)|^2= \langle\chi^+,\chi^+\rangle=(\sum_{j=1}^k d_j^2)|G^+|$. Thus we either have $\langle \chi^+,\chi^+\rangle=|G^+|$, in which case $\rho^+$ is complex irreducible, or else that $\langle \chi^+, \chi^+\rangle = 2|G^+|$, in which case $\chi(s)=0$ for each reflection $s\in G\setminus G^+$. As $d\geq 3$ and $\chi(s)=d-2$ for any reflection, we conclude that $\rho^+$ is complex irreducible.\end{proof}

\subsection{Polyhedral Examples} 
\label{sec:4.3}
We now consider the polyhedral partitions of Theorem~\ref{thm:3poly}. 

\subsubsection{Regular Prisms and Antiprisms}

For part (a), let $n\geq 3$ and let $\rho\colon \mathbb{Z}_n\oplus \mathbb{Z}_2\rightarrow O(3)$ be the product of the inclusions $\mathbb{Z}_n\hookrightarrow U(1)\cong SO(2)$ and $\mathbb{Z}_2=\{\pm1\}=O(1)$. For $u=(u_1,u_2)$ and $a=(a_1,a_2)$ in $\mathbb{C}\times \mathbb{R}$, the orbit polytope $P(\mathbb{Z}_n\oplus \mathbb{Z}_2;u,a)$ is free provided $u_1,u_2\neq 0$, in which case $P(\mathbb{Z}_n\oplus \mathbb{Z}_2;u,a)=P(\mathbb{Z}_n;u_1,a_1)\times P(\mathbb{Z}_2;u_2,a_2)$ is a right (not necessarily uniform) regular prism $Pr(n)$ with regular $n$-gon base.  This is a special case of the ``multiprism" partitions of~\cite[Theorem 1.3]{LS21}. 

For part (b), let $n\geq 2$ and let $\rho\colon \mathbb{Z}_{2n}\rightarrow O(3)$ be the sum of inclusion representation  $\mathbb{Z}_{2n}\hookrightarrow U(1)$ and the representation $\chi\colon \mathbb{Z}_{2n}\rightarrow O(1)$ defined by $\chi(\omega_{2n}^k)=(-1)^k$ for all $0\leq k<2n$. As before, one has a free orbit polytope provided each $u_i$ of $u=(u_1,u_2)\in \mathbb{C}\times\mathbb{R}$ is non-zero. For $n\geq 3$, the translation of an orbit $\mathbb{Z}_{2n}\cdot u$ consists of the vertices of a regular $n$-gon and those of a $\pi/n$-rotated copy of this $n$-gon lying directly above it in a parallel plane. The convex hull is thus a right regular antiprism $APr(n)$, whose faces consist of the two congruent regular $n$-gons and $2n$ congruent isosceles triangles. In particular, $APr(3)$ is an octahedron. When $n=2$, the regular $n$-gons degenerate to perpendicular congruent edges, so that $APr(2)$ is a tetrahedron with congruent isosceles triangle faces.

    The regular $n$-gon bases for $Pr(n)$ and $APr(n)$ guaranteed by Theorem~\ref{thm:orbit} are precisely those which are parallel to the coordinate plane $\mathbb{R}^2\times 0$, in which case the upper bounds given in parts (a) and (b) of Theorem~\ref{thm:3poly} are tight. As there are $\mathbb{R}P^2$ choices of linear planes in $\mathbb{R}^3$, we therefore expect the upper bounds given there to exceed the true value by 2. Owing to the single extra condition on edge length, we conjecture that $N_{Pr(n)}=N_{Ar(n)}=8n-7$ for uniform prisms and antiprisms. 
    
\subsubsection{Omnitruncated and Snub Polyhedra}
The polyhedra of parts (c)--(h) of Theorem~\ref{thm:3poly} are the Coxeter permutahedra and their alternahedra arising from the irreducible 3-dimensional finite reflection groups. These are the isometry groups $\bf{T}, \bf{O}$, and $\bf{I}$ of the Platonic solids, with orders 24, 48, and 120, respectively. The resulting permutahedra are the (not necessarily uniformly) omnitruncated polyhedra~\cite{J66} -- the truncated icosahedron $tO$, truncated cuboctahedron $tCO$, and truncated icosidodecahedron $tID$, respectively (see, e.g., ~\cite[Example 2.7]{Ho12}). To describe their faces, we note that all $d$-dimensional Coxeter permutahedra are $d$-simple~\cite[Theorem 2.3]{Ho12} and that each $j$-face is itself a $j$-dimensional Coxeter permutahedra~\cite[Proposition 2.1]{Ho12}. The 2-dimensional finite reflection groups are precisely the Dihedral groups $D_r$, $r\geq 2$ (where again we view $D_2$ as $\mathbb{Z}_2\times \mathbb{Z}_2$), and it is easily seen that the Coxeter permutahedra for these groups are equiangular $2r$-gons with alternating congruent edges. The facets for parts (c)--(e) are therefore as follows: a single set of congruent rectangles and two sets of congruent hexagons for $tO$;  a set each of congruent rectangles, hexagons, and octagons for $tCO$; and a set each of congruent rectangles, hexagons, and decagons for $tID$.  Alternation of these permutahedra gives the (chiral) snub polyhedra: the icosahedron $sT$ realized as a snub tetrahedron, the snub cube $sC$, and the snub dodecahedron $sD$, respectively, whose faces can be described as follows: in addition to a single class of congruent triangles, one has two sets of congruent equilateral triangles for $sT$; a set of congruent equilateral triangles and a set of congruent squares for $sC$; and a set of congruent equilateral triangles and a set of congruent regular pentagons for $sI$. 

Considering conjugation of the above groups by orthogonal matrices, we expect the estimates of parts (c)--(h) of Theorem~\ref{thm:3poly} to exceed the exact value by $\dim(O(3))=3$ in each case. As there are three distinct edge lengths, two additional conditions are required to ensure uniform polyhedra, and so we again expect the exact values in the uniform setting to be one fewer than the upper bounds given by Theorem~\ref{thm:3poly}.

\subsection{Non-Faithful Representations}
\label{sec:4.4} As an example of Theorem~\ref{thm:affine} in the non-faithful setting, we provide a generalization of Theorem~\ref{thm:polygon} with intersecting convex hulls. For partitions with a somewhat similar spirit, see~\cite{S16}.  

\begin{theorem}
\label{thm:Tobias}
Let $r=r_1r_2$, where $r_1\geq 3$ and $r_2\geq 1$. Then a generic set of $3r-4$ points in $\mathbb{R}^2$ can be partitioned into a collection of $r$ subsets $\{A_i^j\}_{1\leq i \leq r_1, 1\leq j \leq r_2}$ such that\\
(a) $I_i:=\cap_{j=1}^{r_2}\Conv(A_i^j)\neq\emptyset$ for all $1\leq i \leq r_1$, and\\
(b) there exist points $x_1\in I_1,\ldots x_{r_1}\in I_{r_1}$ so that $\{x_1,\ldots, x_{r_1}\}$ is the vertex set of a regular $r_1$-gon.\\
Moreover, such a partition fails for a generic set of $N<3r-4$ points. 
\end{theorem}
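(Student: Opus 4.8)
The plan is to deduce Theorem~\ref{thm:Tobias} from Theorem~\ref{thm:affine} by choosing the right non-faithful representation of the cyclic group $G=\mathbb{Z}_r$. First I would take $G=\mathbb{Z}_r$ with $r=r_1 r_2$ and let $\rho\colon\mathbb{Z}_r\to O(2)$ be the rotation representation that factors through the quotient $\mathbb{Z}_r\to\mathbb{Z}_{r_1}$ followed by the standard rotation action of $\mathbb{Z}_{r_1}$ on $\mathbb{R}^2$ by $2\pi/r_1$; concretely $\rho(k)$ is rotation by $2\pi k/r_1$. This representation is not faithful (its kernel is the index-$r_1$ subgroup $K\cong\mathbb{Z}_{r_2}$), but since $r_1\ge 3$ it contains no trivial subrepresentation, so Theorem~\ref{thm:affine} applies with $d=2$ and $N=t(r,2)-2=(r-2)(3)+2=3r-4$. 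The output of Theorem~\ref{thm:affine} is a partition of the $N$ points, indexed by $G=\mathbb{Z}_r$, into sets $A_g$ with points $x_g\in\Conv(A_g)$ satisfying $x_g=a+\rho(g)\cdot u$ for all $g\in G$.

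Next I would translate this group-indexed data into the double-indexed statement. Write each $g\in\mathbb{Z}_r$ uniquely as $g=(i,j)$ under $\mathbb{Z}_r\cong\mathbb{Z}_{r_1}\times$ (a transversal of $K$), so $1\le i\le r_1$ indexes the image in $\mathbb{Z}_{r_1}$ and $1\le j\le r_2$ indexes the coset of $K$. Set $A_i^j:=A_{(i,j)}$. Since $\rho(g)$ depends only on $i$, equation~\eqref{eqn:orbit} forces $x_{(i,j)}=a+\rho_i\cdot u=:y_i$ to be independent of $j$, where $\rho_i$ denotes rotation by $2\pi i/r_1$. Thus for each fixed $i$, the point $y_i$ lies in $\Conv(A_i^j)$ for every $j$, so $y_i\in I_i:=\bigcap_{j=1}^{r_2}\Conv(A_i^j)$, which in particular is nonempty; this is part (a). For part (b), the points $y_1,\dots,y_{r_1}=a+\rho_i\cdot u$ form (the translate of) a $\mathbb{Z}_{r_1}$-orbit under the standard rotation action, i.e. the vertex set of a regular $r_1$-gon — here I should note the genericity input: by (an easy adaptation of) Lemma~\ref{lem:generic}, for generic point sets $u\ne 0$ and the orbit $\mathbb{Z}_{r_1}\cdot u$ has $r_1$ distinct points spanning $\mathbb{R}^2$, so the $r_1$-gon is nondegenerate.

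For the optimality half, I would again invoke Theorem~\ref{thm:affine}: for generic affine maps $f\colon\Delta_{N-1}\to\mathbb{R}^2$ with $N<3r-4$ there is no collection $(x_g)_{g\in G}$ of pairwise-disjoint support satisfying~\eqref{eqn:orbit} for this $\rho$. One then argues that a partition as in (a)--(b) of Theorem~\ref{thm:Tobias} would produce such a collection: given $\{A_i^j\}$, points $x_i\in I_i$, and a regular $r_1$-gon structure on $\{x_1,\dots,x_{r_1}\}$, pick any labelling so that the rotation carrying $x_i$ to $x_{i+1}$ is rotation by $2\pi/r_1$, set $a$ to be the centroid and $u=x_1-a$, and define $x_{(i,j)}:=x_i$; since $x_i\in I_i\subseteq\Conv(A_i^j)$ these points have pairwise disjoint support (the $A_i^j$ are the parts of a partition) and satisfy~\eqref{eqn:orbit}, a contradiction. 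Hence $N_{\text{partition}}\ge 3r-4$.

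The main obstacle I anticipate is bookkeeping around the non-faithfulness rather than any deep difficulty: one must be careful that "pairwise disjoint support" for the $r$ sets $A_{(i,j)}$ is exactly the condition that $\{A_i^j\}$ is a genuine partition into $r$ parts (Theorem~\ref{thm:affine} already delivers $r$ pairwise vertex-disjoint faces, so this is automatic), and that the genericity conclusions of Lemma~\ref{lem:generic} — which is stated for $N=t(r,d)-d$ and any $\rho$ without trivial subrepresentation, faithful or not — indeed give a nondegenerate $r_1$-gon here; the kernel $K$ simply means $\Stab(u)=K$ rather than trivial, which is harmless since we only need the $r_1$ images $\rho_i\cdot u$ to be distinct and affinely spanning, exactly the $m=d$ conclusion of Lemma~\ref{lem:generic}(ii) for the full-dimensional (on $\mathbb{R}^2$) rotation action of $\mathbb{Z}_{r_1}$. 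Everything else is a routine unpacking of definitions.
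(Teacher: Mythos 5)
Your proposal is correct and follows essentially the same route as the paper: apply Theorem~\ref{thm:affine} (with $d=2$, $N=3r-4$) to the non-faithful representation $\mathbb{Z}_r\to U(1)$, $\omega_r^k\mapsto\omega_{r_1}^k$, note that equation~\eqref{eqn:orbit} forces the chosen points to depend only on the coset of $\ker\rho\cong\mathbb{Z}_{r_2}$ (giving the $r_2$-fold intersections) and to form a translated $\mathbb{Z}_{r_1}$-orbit (giving the regular $r_1$-gon), and invoke Lemma~\ref{lem:generic} for $u\neq 0$ and full dimensionality; the optimality half likewise matches the paper's reduction to the optimality clause of Theorem~\ref{thm:affine}. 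Your write-up just makes explicit the indexing and equivalence that the paper states in one line.
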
 

For instance, a generic set of 32 points in the plane can be partitioned into 12 subsets whose convex hulls intersect in 4 groups of 3, and there are four points, one from each intersection, which are the vertices a square. One may also divide the set into 12 subsets whose convex hulls intersect in 3 groups of 4 so that there are 3 points, one from each intersection, which are the vertices of an equilateral triangle.  

\begin{proof}[Proof of Theorem~\ref{thm:Tobias}]
 Consider the representation $\rho\colon \mathbb{Z}_r\rightarrow U(1)$  given by $\rho(\omega_r^k)=\omega_r^{kr_2}=\omega_{r_1}^k$. Stated in terms of affine mappings $f\colon\Delta_{N-1}\rightarrow \mathbb{C}$, the existence of a partition satisfying (a) and (b) of Theorem~\ref{thm:Tobias} is equivalent to the existence of a collection of points $\{x_k\}_{\in \mathbb{Z}_r}$ from the simplex with pairwise disjoint support and points $a$ and $u\neq0$ in the plane for which $f(x_k)=a+\omega_r^k\cdot u$ for all $k\in \mathbb{Z}_r$. Thus we are finished by Theorem~\ref{thm:affine} and Lemma~\ref{lem:generic}.  
 \end{proof}
 
 \subsection{A Conjecture for Vertex Transitive Polytopes} We conclude with a conjecture. While we have exclusively considered free orbit polytopes, any  vertex--transitive polytope is an orbit polytope given by its isometry group. 
  We expect the upper bound for free orbit polytopes arising from Theorem~\ref{thm:orbit} to carry over to the non-free setting.

\begin{conjecture}
\label{conj:transitive} $N_{P(r,d)}\leq t(r,d)-d$ for any vertex--transitive $d$-polytope $P(r,d)$ on $r$ vertices. 
\end{conjecture}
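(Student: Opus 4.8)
The plan is to prove the conjectured upper bound by extending the orbit-partition method of Theorem~\ref{thm:affine} from free orbits to arbitrary (possibly non-free) orbits, using that every vertex-transitive polytope is an orbit polytope for its own isometry group. Write $H=\Iso(P(r,d))$, let $\rho\colon H\hookrightarrow O(d)$ be the inclusion, and let $u_0$ be a vertex, so that $P(r,d)=\Conv(\rho(H)\cdot u_0)$ and the vertex set is the coset space $H/S$, where $S=\Stab(u_0)$ and $u_0\in\Fix(\rho(S))$. Recasting via affine maps as in Section~\ref{sec:2}, a $P(r,d)$-partition of $N=t(r,d)-d$ points corresponds to a family $(x_{gS})_{gS\in H/S}\subset\Delta_{N-1}$ of $r$ points with pairwise disjoint support, together with $a\in\mathbb{R}^d$ and $u\in\Fix(\rho(S))$, such that
\begin{equation}
\label{eqn:cosetorbit}
f(x_{gS})=a+\rho(g)\cdot u\qquad\text{for all}\quad gS\in H/S.
\end{equation}
The requirement $u\in\Fix(\rho(S))$ is exactly what makes the right-hand side independent of the coset representative. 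Crucially, in the typical situation $\dim\Fix(\rho(S))=1$ (as holds whenever a vertex is the unique fixed direction of its stabilizer, e.g.\ for all regular polytopes), this constraint forces $u$ to be a scalar multiple of $u_0$, so every solution of~\eqref{eqn:cosetorbit} automatically inscribes a copy \emph{similar} to the prescribed $P(r,d)$; the shape-matching issue thus disappears in this case, and only the existence of a solution remains.

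I would next build an $H$-equivariant test map mirroring Sections~2.2--2.3, now indexed by $H/S$ in place of the free group $G$. The configuration space is the deleted join $(\Delta_{N-1})^{\ast H/S}_\Delta$ with the $H$-action through its permutation action on $H/S$, and the codomain is a representation $U$ of $H$ with no trivial summand, assembled from the permutation module $\mathbb{R}[H/S]$ together with a shape-encoding subrepresentation playing the role of $W$ from Proposition~\ref{prop:W}; the fixed-space condition $u\in\Fix(\rho(S))$ is built in by projecting the analogue of $c_\rho$ onto $\Fix(\rho(S))$. The aim is an affine $H$-equivariant map $L$ with $\dim U=t(r,d)-d-1=N-1$ whose zeros are precisely the solutions of~\eqref{eqn:cosetorbit}, so that the existence half of the conjecture reduces to forcing $L$ to vanish.

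The hard part is that when $S\neq\{1\}$ the $H$-action on $H/S$ is transitive but \emph{not} free, with point stabilizers the conjugates $gSg^{-1}$; consequently the induced action on the configuration space is non-free, and Sarkaria's Linear Borsuk--Ulam Theorem~\ref{thm:Sarkaria}, which requires a free action, no longer applies. I expect this to be the central obstacle. The natural remedy is an equivariant-obstruction computation showing that no $H$-equivariant map from $(\Delta_{N-1})^{\ast H/S}_\Delta$ to the unit sphere $S(U)$ exists---for instance by comparing Fadell--Husseini indices or via a Borel-construction argument---which now demands a careful analysis of the fixed-point sets of $S$ and its conjugates on both the configuration space and $U$. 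As in the topological Tverberg theorem, such index computations are cleanest over a prime-power subgroup acting with controlled fixed points, so one workable route is to isolate a subgroup of $H$ that acts freely on $H/S$ and to carry the residual symmetry as a linear side-condition.

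This last remark gives an immediate partial result in the \emph{Cayley} case, where $H$ has a subgroup $G$ of order $r$ complementing $S$: then $G$ acts regularly on $H/S$, the restriction $\rho|_G$ is faithful and full-dimensional, $P(r,d)$ is a free orbit polytope for $G$, and Theorem~\ref{thm:orbit} applies directly (with the similarity class pinned automatically whenever $\dim\Fix(\rho(S))=1$, as above). The genuinely new content is thus the non-Cayley vertex-transitive polytopes, together with the pinning of the similarity type in the rare case $\dim\Fix(\rho(S))>1$. Finally, the bound $t(r,d)-d$---rather than the smaller value conjectured in Section~\ref{sec:4} for the exact $N_{P(r,d)}$---is precisely the cost of fixing the orientation through $\rho$ and floating only $a$ and $u$; exploiting the full $O(d)$-freedom would replace the linear target~\eqref{eqn:cosetorbit} by a nonlinear similarity variety and hence require a Borsuk--Ulam statement beyond the linear one at hand. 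Genericity of the conditions $\Stab(u)=S$ and of the inscribed similarity type can be checked by the degrees-of-freedom arguments of Lemmas~\ref{lem:generic} and~\ref{lem:iso}, taking care of the constraint $u\in\Fix(\rho(S))$.
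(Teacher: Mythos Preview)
First, note that the statement you address is recorded in the paper as an open \emph{conjecture}; the paper offers no proof, so there is nothing to compare your argument against, and your proposal must be read as an attack on a genuinely open problem rather than a reconstruction.

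Your plan has a dimension miscount that blocks the main step even before the non-free action obstacle arises. You aim for an affine $H$-equivariant map into a representation $U$ with $\dim U=N-1=(r-2)(d+1)+1$ whose zeros are exactly the solutions of your coset equation $f(x_{gS})=a+\rho(g)\cdot u$. Set $k=\dim\Fix(\rho(S))$. That system comprises $rd$ scalar equations in the $d+k$ free parameters $(a,u)\in\mathbb{R}^d\times\Fix(\rho(S))$, so the analogue of $W$ from Proposition~\ref{prop:W} has dimension $rd-d-k$; together with $\mathbb{R}^\perp[H/S]$ the natural target has dimension $(r-1)(d+1)-k$. This equals $N-1$ only when $k=d$, i.e.\ only when $S$ acts trivially, which by faithfulness of $H\hookrightarrow O(d)$ forces $S=\{e\}$ --- the free case already handled by Theorem~\ref{thm:orbit}. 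For nontrivial $S$ one has $k<d$ and the target strictly exceeds $N-1$, so even a perfect Borsuk--Ulam substitute for the non-free $H$-action would yield only $N_{P(r,d)}\le t(r,d)-k$, which is weaker than the conjectured $t(r,d)-d$.

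Your Cayley-case shortcut has a matching gap. Theorem~\ref{thm:orbit} applied to $\rho|_G$ produces \emph{some} free orbit polytope $\Conv(a+G\cdot u)$ for a generic $u\in\mathbb{R}^d$, but for a fixed representation distinct $G$-orbits are in general not similar, so nothing forces the outcome to be similar to the prescribed $P(r,d)=\Conv(G\cdot u_0)$. The cases of Section~\ref{sec:4.1} succeed precisely because the centralizer of the image acts transitively on the unit sphere (e.g.\ $SO(2)$ on $S^1$, $\Sp(1)$ on $S^3$), making all nonzero orbits automatically similar --- a special feature absent for general representations. Imposing $u\in\Fix(\rho(S))$ to pin the similarity type reintroduces the dimension deficit above. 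Thus neither the coset test-map route nor the Cayley reduction, as described, reaches the conjectured bound; the missing $d-k$ degrees of freedom are exactly the part of the $O(d)$-similarity action that your linear setup cannot absorb.
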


\section*{Acknowledgements}
The authors are grateful to the anonymous referee, whose many thoughtful suggestions improved the clarity and presentation of the manuscript. The authors also thank Frieder Ladisch for very useful discussions and Florian Frick for helpful comments.

\bibliography{bib}{}
\bibliographystyle{plain}

\end{document}